\newcommand\AND{\quad\text{and}\quad}
\newcommand\Av{\mathsf{A}}
\newcommand\C{\mathbb C}
\newcommand\DD{\mathbb D}
\newcommand\diam{\mathsf{diam}}
\newcommand\dist{\mathsf{dist}}
\newcommand\dsf{\mathsf{d}}
\newcommand\ep{\varepsilon}
\newcommand\HH{{\mathbb H}}
\newcommand\hor{\mathfrak{h}}
\newcommand\im{\mathfrak{i}\,}
\newcommand\Lap{\Delta}%{\mathfrak{L}}
\newcommand\msf{\mathsf m}
\newcommand\N{\mathbb N}
\newcommand\Prob{\mathsf{Pr}}
\newcommand\qq{\mathsf{q}}
\newcommand\R{\mathbb R}
\newcommand\Ss{\mathbb S}
\newcommand\T{\mathbb T}
\newcommand\wh{\widehat}
\newcommand\Xx{\mathbb{X}}
\numberwithin{equation}{section}
\newtheoremstyle{mythm}% name
  {9pt}%      Space above, empty = `usual value'
  {9pt}%      Space below
  {\itshape}% Body font
  {0pt}%         Indent amount (empty = no indent, \parindent = paraindent)
  {\bfseries}% Thm head font
  {}%        Punctuation after thm head
  { }% Space after thm head: \newline = linebreak
  {\thmnumber{(#2)}\thmname{ #1}\thmnote{ #3}}%         Thm head spec
\newtheoremstyle{mydef}% name
  {9pt}%      Space above, empty = `usual value'
  {9pt}%      Space below
  {\normalfont}% Body font
  {0pt}%         Indensf suppt amount (empty = no indent, \parindent = paraindent)
  {\bfseries}% Thm head font
  {}%        Punctuation after thm head
  { }% Space after thm head: \newline = linebreak
  {\thmnumber{(#2)}\thmname{ #1}\thmnote{ #3}}%         Thm head spec
\theoremstyle{mythm}
\newtheorem{thm}[equation]{Theorem.}
\newtheorem{pro}[equation]{Proposition.}
\newtheorem{lem}[equation]{Lemma.}
\newtheorem{cor}[equation]{Corollary.}
\theoremstyle{mydef}
\newtheorem{imp}[equation]{}
\newtheorem{rmks}[equation]{Remarks.}
\begin{document}$\,$ \vspace{-1truecm}
\title{Moments of Riesz measures on Poincar\'e disk and homogeneous tree -- a
comparative study}

\author{\bf Tetiana BOIKO and Wolfgang WOESS}
\address{\parbox{.8\linewidth}{Institut f\"ur Mathematische Strukturtheorie 
(Math C),\\ 
Technische Universit\"at Graz,\\
Steyrergasse 30, A-8010 Graz, Austria\\}}
\email{boiko@math.tugraz.at, woess@TUGraz.at}
\date{August 20, 2014, revised version + corrrection in \eqref{eq:hypLap} from June 10, 2022} 
\subjclass[2000] {05C05, %trees 
30F45, %Conformal metrics (hyperbolic, Poincar?, distance functions) 
31C05, %Harmonic, subharmonic, superharmonic functions
%% 31C12, %Potential theory on Riemannian manifolds 		  }
%% 51M10, %Hyperbolic and elliptic geometries (general) and generalizations 
%% 53C35,  %Symmetric spaces
%% 60G50,	%Sums of independent random variables; random walks
60J50% Boundary theory
}
\keywords{Euclidean disk, hyperbolic plane, homogeneous tree, Laplace operators,
boundaries, harmonic and subharmonic functions, Riesz measure}
\begin{abstract}
One of the purposes of this paper is to clarify the
strong analogy between potential theory on the open unit disk and the homogeneous
tree, to which we dedicate an introductory section. We then exemplify this analogy
by a study of Riesz measures.
Starting from interesting work by Favorov and Golinskii \cite{FaGo1}, we consider 
subharmonic functions on the open unit disk, resp. on the homogenous tree. 
Supposing that we can control the way how those functions may tend to infinity
at the boundary, we derive moment type conditions for the Riesz measures.
One one hand, we generalise the previous results of \cite{FaGo1} for the disk,
and on the other hand, we show how to obtain analogous results in the discrete
setting of the tree.  
\end{abstract}

\thanks{Supported by Austrian Science Fund projects  FWF W1230 and FWF P24028 and by NAWI Graz}

%\setcounter{tocdepth}{1}
%\tableofcontents

\maketitle

\markboth{{\sf T. Boiko and W. Woess}}
{{\sf Moments of Riesz measures}}
\baselineskip 15pt

%%%%%%%%%%%%%%%%%%%%%%%%%%%%%%%%%%%
%% \setcounter{section}{-1}
\section{Introduction}\label{sec:intro}

The homogeneous tree $\T = \T_q$ with degree $q+1$ is in many respects
a discrete analogue of the hyperbolic plane. These are the two basic
examples of Gromov-hyperbolic metric spaces. In the Poincar\'e metric,
the hyperbolic plane is the open unit disk $\DD$ as a topological space. 
Its natural  geometric compactification is obtained by passing from the 
hyperbolic to the Euclidean metric and taking the closure, i.e., the 
closed unit disk. Analogously, the end compactification of $\T$ is obtained 
by passing from the original graph metric to a new (bounded) metric and 
taking the completion.

Various objects, formulas, properties, theorems, etc., of geometric, 
algebraic, analytic, potential theoretic, or stochastic nature on $\DD$ 
have counterparts on $\T$ and vice versa. It is not always immediately 
apparent that looking at $\DD$ both with Euclidean and with hyperbolic 
eyeglasses may provide additional insight. But this \emph{is} true
when one wants to understand the analogies between $\T$ and $\DD$. 
The purpose of this note is to exhibit some potential theoretic aspects 
of that correspondence. The starting point is a classical theorem of
{\sc Blaschke}~\cite{Bla}: 

\smallskip

\emph{A set $\{ z_k: k \in \N\} \subset \DD$ is the set of zeroes of a 
bounded analytic function $f$ on $\DD$ if and only if 
$\quad\displaystyle \sum_k (1-|z_k|) < \infty\,$.}

\smallskip

This also allows for the case where each $z_k$ is counted according to 
its multiplicity $\mathsf{mult}(z_k)$ as a zero of $f$. We interpet this 
theorem in terms of the subharmonic function 
$u:\DD \to [-\infty\,,\,\infty)$ given by $u(z) =\log |f(z)|$. 
We let $\mu^u$ be the Riesz measure of $u$ in its Riesz decomposition 
(see below for details). Being bounded above, $u$ has a harmonic majorant, 
which leads to finiteness of the ``moment'' 
\begin{equation}\label{eq:moment}
\int_{\DD} (1-|z|)\, d\mu^u(z)< \infty\,.
\end{equation}
Since $\mu^u = \sum_k \mathsf{mult}(z_k) \cdot \delta_{z_k}\,$, this 
just means finiteness of $\sum_k (1-|z_k|) \,\mathsf{mult}(z_k)$, 
so that the Blaschke condition takes the form \eqref{eq:moment}.  

A change of the viewpoint is now suggestive.
We start directly with a subharmonic function $u$, and instead of 
assuming that it is bounded above, we admit that it tends to $\infty$ 
in some controlled way when approaching a subset 
$E \subset \partial \DD = \Ss$, the boundary of the disk 
(the unit circle).  From properties of $E$ and the way how $u$ tends 
to infinity at $E$, we then want to deduce properties of the Riesz 
measure $\mu^u\,$. This approach was undertaken in two 
substantial papers by {\sc Favorov and Golinskii}~\cite{FaGo1}, 
\cite{FaGo2}, which were  the main inspiration for the present note.
We shall provide more general versions of some of their results on
the Riesz measure of subharmonic functions on the disk.
%, and an example that shows sharpness. 

\smallskip

On the homogeneous tree $\T$,  the geometrical habit is converse as compared to
the disk: on one hand,
one is used to look at the \emph{Euclidean} unit disk $\DD$ and its closure, which 
in the spirit of the present note arises by a change from the ``original''
hyperbolic metric to the ``new'' Euclidean metric  which is the one of 
its compactification.
On the other hand, one is used to look at the tree with its habitual integer-valued 
graph metric -- this is our \emph{hyperbolic} object, and when we introduce the 
end compactification, we pass to a suitable new, maybe less habitual metric
which is the one that corresponds to the Euclidean metric of $\DD$.

(Sub)harmonic functions on $\T$ are defined via the discrete Laplacian $P-I$ 
(or $I-P$, if one desires a positive semidefinite operator), where $P$ is the 
transition matrix of the simple random walk. Of course, here we also have 
the Riesz decomposition theorem. We shall see that once we understand
the correspondence between tree and disk completely, we can obtain the same 
type of moment condition for the Riesz measure of a subharmonic function 
as in \eqref{eq:moment}: we need to realise that the term $1-|z|$ in 
\eqref{eq:moment} is the distance from $z$ to the boundary in the metric of 
the respective compactification.

\smallskip

In the next Section \ref{sec:disk-tree}, we provide an expository description 
of the basic potential theoretic features of $\DD$ and $\T$.
On purpose slightly beyond the scope of the subsequent section, it aims at 
providing a good  understanding of part of the many common features of those 
two structures.  
Subsequently, in Section \ref{sec:harmonic}, we present and investigate our basic
moment conditions for subharmonic functions on those two spaces.

\section{Basic potential theory on disk and tree}\label{sec:disk-tree}

%\newpage
 
%\subsection{Euclidean disk and hyperbolic plane}\label{sub:eucli-hyp}

%\begin{center}
\textbf{A. Euclidean and hyperbolic disk}
%\end{center}

\smallskip

The Euclidean unit disk
$$
\DD = \{ z= x + \im y \in \C : |z| = \sqrt{x^2 + y^2} < 1.
$$
carries the  Euclidean metric $\dsf_{\DD}\,$, induced by the absolute value, resp. length element
\begin{equation}\label{Euclidean-metric}
\dsf_{\DD}(z,w) = |z-w| \AND d_{\DD}s =  \sqrt{dx^2 + dy^2}\,.
\end{equation}
The standard measure is Lebesgue measure -- for which here we sometimes write 
$\msf_{\DD}$ -- with area element $d_{\DD}z = dz = dx\, dy$.
The Euclidean Laplace operator is 
$$
\Lap_{\DD} = \partial_x^2 + \partial_y^2\,.
$$
A \emph{harmonic function} is a real-valued function $h \in C^2(\DD)$ such that
$\Lap_{\DD} = 0$.  For the definition of a \emph{subharmonic function}, see e.g. 
{\sc Helms}~\cite[p.58]{He}, who rather considers superharmonic functions:
the correspondence is just by a change of the sign.
A function $u : \DD \to [-\infty, +\infty)$ is
subharmonic on $\DD$ if it is upper semicontinuous, and for every
$z \in \DD$ and $r < 1-|z|$, one has $\Av_r u(z) \ge u(z)$, where
$$
\Av_r u(z) = \Av_r^{\DD}u(z) = 
\frac{1}{2\pi}\int_0^{2\pi} u(z + r e^{\im t})\,dt
$$
is the (Euclidean) average of $u$ over the circle with radius $r$
centred at $z$. In addition, we require that the set $\{z : u(z) = -\infty\}$
has Lebesgue measure $0$. 
It is well known that  $u \in C^2(\DD)$ is subharmonic
if and only if $\Lap_{\DD} u \ge 0$, see \cite[Thm. 4.8]{He}. 
If $u$ is not smooth, then $\Lap_{\DD}u$ is defined in the sense
of distributions. Subharmonicity means that this is a non-negative
Radon measure. The \emph{Riesz measure} associated with $u$ is then
\begin{equation}\label{eq:Rieszm}
\mu^u = \frac{1}{2\pi} \Lap_{\DD}u\,,\quad \text{that is,}\quad
\int_{\DD} f\, d\mu^u  = \frac{1}{2\pi} \int_{\DD} u(z)\, \Lap_{\DD}f(z)\, d\msf_{\DD}z
\end{equation}
for every $C^{\infty}$-function $f$ on $\DD$ with compact support in $\DD$. 
If $u \in C^2(\DD)$ then the ordinary function $\frac{1}{2\pi} \Lap_{\DD}u$ is
the density of $\mu^u$ with respect to Lebesgue measure $\msf_{\DD}\,$.
Furthermore, $h \in C^2(\DD)$ is harmonic if
and only if $\Av_r h(z) = h(z)$ for every $z \in \DD$ and $r < 1-|z|$.

The \emph{Green function} of $\Lap_{\DD}$ is 
\begin{equation}\label{eq:green}
G_{\DD}(z,w) = \log \frac{|1-z \bar w|}{|z-w|}\,,\quad z, w \in \DD\,.
\end{equation}
For any non-negative measure $\mu$ on $\DD$, the function $G_{\DD}\mu$ 
on $\DD$ defined by
$$
G_{\DD}\mu(z) = \int_{\DD} G_{\DD}(z,w)\,d\mu(w)\,,
$$
is called the \emph{potential} of $\mu$ if the integral is finite at some 
($\!\!\iff\!$ almost every) $z \in \DD$. Then $-G_{\DD}\mu$ is is a subharmonic 
function.
If $u$ is subharmonic and, in addition, posseses some  harmonic majorant 
on $\DD$, then
it possesses its smallest harmonic majorant $h$. In this case,
the \emph{Riesz decomposition} of $u$ has the form
\begin{equation}\label{eq:RieszD}
u = h - G_{\DD}\mu^u\,.
\end{equation}
See e.g. {\sc Ransford}~\cite[Thm. 4.5.4]{Ran}. In absence of a harmonic
majorant, for the general Riesz decomposition theorem see \cite[Thm. 3.7.9]{Ran} 
or \cite[Thm. 6.18]{He}.

\smallskip

We now consider the hyperbolic plane $\HH$. Basic hyperbolic potential theory 
appears rather to be ``common knowledge'' than being accessible in a comprehensive
treatise, with the exception of {\sc Stoll}~\cite{Sto}. See also the introductory
chapter of {\sc Helgason}~\cite{Hel}.
We use the Poincar\'e disk model; see e.g. {\sc Beardon}~\cite[Chapter 7]{Be}.
$\HH$ % = \{ z= x + \im y \in \C : |z| = \sqrt{x^2 + y^2} < 1\}
coincides with $\DD$ as a set and topologically, but the hyperbolic length 
element and metric are
\begin{equation}\label{eq:hypmetric}
d_{\HH}s =  \frac{2\sqrt{dx^2 + dy^2}}{1-|z|^2} \AND
\rho_{\HH}(z,w) = \log\frac{|1-z\bar w| + |z-w|}{|1-z\bar w| - |z-w|}.
\end{equation}
The hyperbolic measure $\msf_{\HH}$ has area element
\begin{equation}\label{eq:hypmeas}
d\msf_{\HH}(z) = d_{\HH}z =  \frac{4dz}{(1-|z|^2)^2} 
= 4 \cosh^4 \frac{\rho_{\HH}(z,0)}{2}\,dz\,.
\end{equation}
This means conversely that we can express
Lebesgue measure $\msf_{\DD}$ on $\HH$ as
\begin{equation}\label{eq:Lebmeas}
d\msf_{\DD}(z) 
= \frac{1}{4\cosh^4\bigl(\rho_{\HH}(z,0)/2\bigr)} \, d\msf_{\HH}(z)
\approx e^{-2 \rho_{\HH}(z,0)}\, d\msf_{\HH}(z)\,,\quad\text{as}\;
\rho_{\HH}(z,0) \to \infty\,.
\end{equation}
The hyperbolic Laplace operator in the variable $z = x + \im y$ is 
\begin{equation}\label{eq:hypLap}
\Lap_{\HH} = \frac{(1-|z|^2)^2}{4} \Lap_{\DD}\,.
\end{equation}
In particular, its harmonic functions are the same as the 
$\Lap_{\DD}$-harmonic functions. Above, we defined the Euclidean average
over a circle in $\DD$. Now, we let $r > 0$ and $z \in \HH$ and consider
the hyperbolic circle
$C^{\HH}(z,r) = \{w \in \HH : \rho_{\HH}(z,w) = r\}$.
This is also a Euclidean circle:
$\;
C^{\HH}(z,r) = C^{\DD}(z',r')\,,$ where
$$
z' = \frac{1-\tanh^2 (r/2)}{1- |z|^2 \tanh^2 (r/2)}\, z \AND
r'= \frac{1-|z|^2}{1- |z|^2 \tanh^2 (r/2)}\,\tanh(r/2)\,.
$$
Its 
%Euclidean centre is $z$ only when
%$z = 0$, in which case its Euclidean radius is $\tanh (r/2)$, while its
hyperbolic length is $2\pi \sinh r$, see  \cite[page 132]{Be}.

Now, a function $u : \HH \to [-\infty, +\infty)$ is
\emph{subharmonic on $\HH$} if it is lower semicontinuous, 
$\msf_{\HH}(\{z : u(z) = -\infty\})=0$, and for every
$z \in \HH$ and $r > 0$, one has $\Av^{\HH}_r u(z) \ge u(z)$, where
$$
\Av_r^{\HH} u(z) = 
\frac{1}{2\pi \sinh r}\int_{C^{\HH}(z,r)} u\,\,d_{\HH}s\,.
$$
\begin{lem}\label{lem:euc-hyp} A function $u$ is hyperbolically superharmonic
if and only if it is superharmonic on $\DD$ in the Euclidean sense.
\end{lem}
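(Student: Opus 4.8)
The plan is to examine the three constituents of the two definitions one at a time: the semicontinuity requirement, the condition on the exceptional set, and the circle mean-value inequality. The first two are immediate. Since $\HH$ and $\DD$ are literally the same topological space, upper (resp.\ lower) semicontinuity means exactly the same thing in both settings. By \eqref{eq:hypmeas} we have $d\msf_{\HH} = \tfrac14(1-|z|^2)^{-2}\, d\msf_{\DD}$ with a density that is strictly positive and finite at every point of $\DD$, so $\msf_{\HH}$ and $\msf_{\DD}$ are mutually absolutely continuous; hence a subset of $\DD$ is $\msf_{\HH}$-null precisely when it is Lebesgue null, and the conditions on $\{u = +\infty\}$ agree. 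Everything therefore comes down to comparing the two mean-value conditions.

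For that comparison I would go through the distributional Laplacian. The excerpt already records, on the Euclidean side, that an upper semicontinuous $u$ with Lebesgue-null $-\infty$-set is subharmonic on $\DD$ if and only if the distribution $\Lap_{\DD}u$ is a non-negative Radon measure; the verbatim analogue with $\Lap_{\HH}$ in place of $\Lap_{\DD}$ characterises hyperbolic subharmonicity (the hyperbolic counterpart of the Riesz theory; see \cite{Sto}), where now one uses that $\Lap_{\HH}$ is symmetric with respect to $\msf_{\HH}$ in order to set $\langle \Lap_{\HH}u, f\rangle = \int_{\DD} u\,\Lap_{\HH}f\,d\msf_{\HH}$ for $f \in C_c^\infty(\DD)$. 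Combining \eqref{eq:hypLap}, i.e.\ $\Lap_{\HH} = \tfrac14(1-|z|^2)^2\,\Lap_{\DD}$ on $C^2$-functions, with the density above yields the pointwise identity $\Lap_{\HH}f \, d\msf_{\HH} = \Lap_{\DD}f\, d\msf_{\DD}$, whence
\begin{equation*}
\langle \Lap_{\HH}u, f\rangle = \int_{\DD} u\,\Lap_{\HH}f\,d\msf_{\HH} = \int_{\DD} u\,\Lap_{\DD}f\,d\msf_{\DD} = \langle \Lap_{\DD}u, f\rangle
\end{equation*}
for all $f \in C_c^\infty(\DD)$; that is, $\Lap_{\HH}u$ and $\Lap_{\DD}u$ are the same distribution. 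In particular one of them is a non-negative measure exactly when the other is, so $u$ is hyperbolically subharmonic if and only if it is subharmonic in the Euclidean sense, and replacing $u$ by $-u$ gives the assertion for superharmonic functions.

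A more hands-on alternative stays with the averaging definitions. For $u \in C^2(\DD)$ the equivalence is immediate from \eqref{eq:hypLap}, since the conformal factor $\tfrac14(1-|z|^2)^2$ is strictly positive, so that $\Lap_{\HH}u \le 0$ if and only if $\Lap_{\DD}u \le 0$. For a general superharmonic $u$ one approximates it by a monotone sequence of smooth superharmonic functions produced by mollification --- an operation available on either Riemannian structure --- and the two schemes are compatible on the common underlying manifold $\DD$ because, as the explicit formulae for $z'$ and $r'$ above show, the hyperbolic disks are exactly the Euclidean disks that are relatively compact in $\DD$: a smooth function that is superharmonic on such a disk in one sense is superharmonic on it in the other. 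One then passes to the limit in the corresponding circle averages by monotone convergence. Either way, the one delicate point --- and the place where I expect the main effort --- is precisely this passage from $C^2$-functions to merely semicontinuous ones; the remaining geometric and measure-theoretic bookkeeping is routine.
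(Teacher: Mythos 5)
The paper offers no proof of this lemma at all --- it is stated as a known fact in the expository Section \ref{sec:disk-tree} and the text moves straight on to the Green function --- so there is nothing to compare routes against; what matters is whether your argument stands on its own. Your first (distributional) argument does, and it is the right one: the pointwise identity $\Lap_{\HH}f\,d\msf_{\HH} = \Lap_{\DD}f\,d\msf_{\DD}$, obtained by cancelling the conformal factor $\tfrac14(1-|z|^2)^2$ of \eqref{eq:hypLap} against the density $4(1-|z|^2)^{-2}$ of \eqref{eq:hypmeas}, shows that the two distributional Laplacians of $u$ coincide, and hence that one is a non-positive Radon measure exactly when the other is. This not only proves the lemma (at the level of rigour the paper itself adopts when it identifies subharmonicity with non-negativity of the distributional Laplacian), it also explains the paper's subsequent unproved assertion that the hyperbolic Riesz measure and Riesz decomposition coincide with the Euclidean ones. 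The measure-theoretic preliminaries (mutual absolute continuity of $\msf_{\HH}$ and $\msf_{\DD}$, identical topology) are handled correctly.

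Two caveats on your ``hands-on'' alternative. First, the statement that hyperbolic disks are exactly the relatively compact Euclidean disks is true as a statement about point sets, but the centres differ and the hyperbolic arc-length measure on $C^{\HH}(z,r)$ is not the uniform measure on the corresponding Euclidean circle; so the two mean-value inequalities do not match circle-by-circle, and the comparison for smooth functions really does have to go through the sign of the Laplacian via \eqref{eq:hypLap}, as you do --- the geometric remark about coinciding disks is doing no work and is slightly misleading. Second, the approximation step is genuinely asymmetric: Euclidean mollification of a Euclidean superharmonic function gives smooth Euclidean superharmonic approximants increasing to $u$, but for the converse direction you need a smoothing adapted to the hyperbolic structure (or, more simply, the observation that $\Av_r^{\HH}u(z)$ is itself a weighted average of Euclidean circle averages, so Euclidean mollification still works after a short computation). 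You flag this as the delicate point, which is fair, but as written it is a sketch; the distributional argument is the one to keep.
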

% % \begin{proof} ?
% % The geometry of $\HH$ is invariant under M\"obius transformations
% % of the unit disk. Thus, if $g: \HH \to \HH$ is any M\"obius 
% % transformation then $u$ is superharmonic on $\HH$ if and only if $u \circ g$
% % has this property. Now for every $z \in \HH$ we can find a M\"obius
% % transformation $g_z$ such that $g_z(0)=z$. Thus 
% % $u$ is superharmonic on $\HH$ if and only if 
% % $$
% % \Av_r^{\HH} [u\circ g_z](0) \le  u\circ g_z(0) \quad \text{for every}\; z \in
% % \HH.
% % $$
% % Since any hyperbolic circle centred at $0$ with radius $r$ is an
% %  Euclidean circle with the
% % same centre and radius $\tanh (r/2)$, and vice versa, and since along such a
% % circle, the hyperbolic length element coincides with the c
% % \end{proof} 
The Green function of $\Lap_{\HH}$ is the same as the one for $\Lap_{\DD}$
given in \eqref{eq:green}, and will henceforth also be denoted by
$G_{\HH}(\cdot,\cdot)$. Using the hyperbolic metric,
\begin{equation}\label{greenhyp}
G_{\HH}(z,w) = - \log \,\tanh\bigl( \rho_{\HH}(z,w)/2\bigr).
\end{equation} 
Consequently, the hyberbolic Riesz decomposition and the Riesz measure 
of a superharmonic function $u$ are the same as the Euclidean one.
%If $u \in C^2(\HH)$ then the Riesz measure $\mu^u$ is absolutely continuous with respect
%to $\msf_{\HH}\,$, and we must have
%$$
%\frac{d\mu^u}{d\msf_{\HH}}(z) = -\Lap_{\HH}u(z)\,.
%$$
%In view of the formulas that link the hyperbolic area element and Laplacian
%with Lebesgue measure and $\Lap_{\DD}\,$, we see that this is the same
%as the Riesz measure for the Euclidean disk $\DD$, as it must be. 

\smallskip

The natural hyperbolic \emph{compactification} $\wh \HH$ of $\HH$ arises 
from the
identification of $\HH$ with $\DD$ and taking the Euclidean closure. The 
\emph{boundary at infinity} $\partial \HH$ of $\HH$ is then the unit circle 
$\Ss$.
It is instructive to interpret this as follows: we first transform the
metric $\rho_{\HH}$ of the hyperbolic plane into a new metric, namely
the Euclidean metric. For use in the subsection on trees, note that on the
large scale, the  change of the metric is quantified by
\begin{equation}\label{eq:metrel}
\begin{aligned}
\dsf_{\DD}(z,\Ss) &= 1 - |z| = \frac{2}{1 + e^{\rho_{\HH}(z,0)}} \\ 
&\approx 2 e^{-\rho_{\HH}(z,0)} \quad \text{as} \; |z| \to 1\,,\; 
\text{or equivalently, as} \; \rho_{\HH}(z,0) \to \infty\,.
\end{aligned} 
\end{equation}
In order to get used to the two geometric views on the same object, we
can freely switch back and forth: $\DD \leftrightarrow \HH$ and
$\Ss \leftrightarrow \partial \HH$. 

\smallskip

The \emph{Poisson kernel} on $\HH \times \partial \HH = \DD \times \Ss$
is defined for $z \in \HH$, $\xi \in \Ss$ as
\begin{equation}\label{eq:poisson}
P(z,\xi) = \frac{1 - |z|^2}{|\xi - z|^2}
= \lim_{w \to \xi}\frac{G_{\HH}(z,w)}{G_{\HH}(0,w)} = e^{-\hor_{\HH}(z,\xi)}. 
\end{equation} 
with the \emph{Busemann function}
\begin{equation}\label{eq:busemann}
\hor_{\HH}(z,\xi) = 
\lim_{w \to \xi} \Bigl(\rho_{\HH}(w,z) - \rho_{\HH}(w,0)\Bigr).
\end{equation} 
%The representation  as a limit of quotients of Green functions
%identifies $P(z,\xi)$ as a \emph{Martin kernel} (its classical version),
%while the representation in terms of the Busemann function gives it a
%purely geometric interpretation. 
It also has a probabilistic interpretation:
we start Euclidean Brownian motion (BM) at $z \in \DD$ and 
consider its hitting distribution $\nu_z$ on the boundary $\Ss$.
That is, if $B \subset \Ss$ is a Borel set, then $\nu_z(B)$ is the probability 
that the first visit of BM to $\Ss$ occurs in a point of $B$. 
Denoting by $\lambda_{\Ss}$ the normalized Lebesgue arc measure on the unit circle,
we have 
\begin{equation}\label{eq:dens-P}
\frac{d\nu_z}{d\lambda_{\Ss}} (\xi) =  P(z,\xi)\,,\quad \xi \in \Ss\,.
\end{equation}
Note that $\nu_0 = \lambda_{\Ss}\,$. 
%We subsume several potential theoretic properties, in particular t
%The \emph{Poisson representation} of positive harmonic functions is statement
%(b) of the next theorem.
%
%, and the solution of the \emph{Dirichlet problem,}
%which is statement (d).
%
\begin{thm}\label{thm:poissonintegral} \emph{(a)} For every $\xi \in \Ss$,
the function $z \mapsto P(z,\xi)$ is harmonic on $\DD \equiv \HH$.\\[4pt]
\emph{(b)} \emph{[Poisson representation]} For every positive harmonic 
function $h$ on $\DD \equiv \HH$,
there is a unique Borel measure $\nu^h$ on $\Ss \equiv \partial \HH$
such that 
$$
h(z) = \int_{\Ss} P(z,\cdot)\,d\nu^h\,.
$$
%\emph{(c)} For every bounded harmonic function $h$ on $\DD \equiv \HH$,
%there is a unique function 
%$\varphi = \varphi^h \in L^{\infty}(\Ss, \lambda_{\Ss} \equiv \partial \HH$
%such that 
%$$
%h(z) = \int_{\Ss} P(z,\cdot)\,\varphi^h\, d\lambda_{\Ss}\,.
%$$
\emph{(c)} For every continuous function $\varphi$ on $\Ss \equiv \partial\HH$,
$$
h(z) = \int_{\Ss} P(z,\cdot)\,\varphi\, d\lambda_{\Ss}
$$
is the unique harmonic function $h$ on $\DD \equiv \HH$ such that
$$
\lim_{z \to \xi} h(z) = \varphi(\xi)\quad \text{for every}\; \xi \in \Ss\,.
$$
%It is given by the integral formula of \emph{(c)}.
\end{thm}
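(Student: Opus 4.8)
The plan is to prove the three parts in the order (a), (c), (b), since (c) is essentially the classical Dirichlet-problem solution on the disk and (b) can then be deduced by a compactness/Riesz-representation argument built on (a). For part (a), I would simply verify directly that $\Lap_{\DD} P(\cdot,\xi) = 0$. Writing $P(z,\xi) = \frac{1-|z|^2}{|\xi-z|^2}$ with $\xi = e^{\im\theta}$ fixed, one can either grind through the Euclidean Laplacian in $x,y$, or — more cleanly — recall that $P(z,\xi) = \operatorname{Re}\frac{\xi+z}{\xi-z}$ is the real part of a holomorphic function of $z$ on $\DD$ (the singularity sits at $z=\xi\in\Ss$, outside $\DD$), hence harmonic. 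Since $\Lap_{\HH}$ and $\Lap_{\DD}$ have the same harmonic functions by \eqref{eq:hypLap}, this also gives harmonicity on $\HH$. The identity $P(z,\xi) = e^{-\hor_{\HH}(z,\xi)}$ asserted in \eqref{eq:poisson} can be checked from \eqref{eq:busemann} by taking $w\to\xi$ radially, using the explicit formula for $\rho_{\HH}$ in \eqref{eq:hypmetric}; this is a routine limit.

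For part (c), given $\varphi\in C(\Ss)$, set $h(z) = \int_{\Ss} P(z,\xi)\varphi(\xi)\,d\lambda_{\Ss}(\xi)$. Harmonicity of $h$ follows from part (a) by differentiating under the integral sign (justified since $P(\cdot,\xi)$ and its $z$-derivatives are bounded uniformly for $z$ in a compact subset of $\DD$ and $\xi\in\Ss$). For the boundary behaviour, I would use the three standard properties of the Poisson kernel as an approximate identity: $P(z,\xi)>0$; $\int_{\Ss}P(z,\xi)\,d\lambda_{\Ss}(\xi)=1$ for all $z\in\DD$ (which itself follows by applying the mean value property to the harmonic function $1$, or by a direct computation); and for each neighbourhood $U$ of a fixed $\zeta\in\Ss$, $\sup_{\xi\notin U}P(z,\xi)\to 0$ as $z\to\zeta$. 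Splitting the integral $h(z)-\varphi(\zeta) = \int_{\Ss}P(z,\xi)\bigl(\varphi(\xi)-\varphi(\zeta)\bigr)\,d\lambda_{\Ss}(\xi)$ into the parts over $U$ and over $\Ss\setminus U$, the first is small by continuity of $\varphi$ at $\zeta$ together with $\int P\,d\lambda_\Ss=1$, and the second is small by the concentration property; this gives $\lim_{z\to\zeta}h(z)=\varphi(\zeta)$. Uniqueness: if $h_1,h_2$ are two such harmonic functions, then $h_1-h_2$ is harmonic on $\DD$ and extends continuously by $0$ to $\Ss$, hence is identically $0$ by the maximum principle for harmonic functions on the (now compact) closed disk.

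For part (b), let $h$ be positive and harmonic on $\DD$. For $0<r<1$ define measures $d\nu_r(\xi) = h(r\xi)\,d\lambda_{\Ss}(\xi)$ on $\Ss$; these are positive with total mass $\nu_r(\Ss) = \frac{1}{2\pi}\int_0^{2\pi}h(re^{\im\theta})\,d\theta = h(0)$, constant in $r$ by the mean value property. Since $\Ss$ is compact and the masses are uniformly bounded, by weak-$*$ (Helly) compactness there is a sequence $r_n\uparrow 1$ with $\nu_{r_n}\to\nu^h$ weakly for some positive Borel measure $\nu^h$ on $\Ss$. One then checks that $h(z) = \int_{\Ss}P(z,\xi)\,d\nu^h(\xi)$: rescaling, $h(r_n z) = \int_{\Ss}P(z,\xi)\,d\nu_{r_n}(\xi)$ — this is exactly the solved Dirichlet problem on the disk of radius $r_n$ with boundary data $h(r_n\cdot)$, which is continuous, so (c) applied on that disk gives the formula — and letting $n\to\infty$, the left side tends to $h(z)$ by continuity while the right side tends to $\int P(z,\cdot)\,d\nu^h$ by weak convergence (using that $P(z,\cdot)\in C(\Ss)$ for fixed $z\in\DD$). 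Uniqueness of $\nu^h$: if $\int P(z,\cdot)\,d\nu = \int P(z,\cdot)\,d\nu'$ for all $z$, recover the measures by letting $z=r\xi\to\xi$; more precisely, $\int_{\Ss}\varphi\,d\nu = \lim_{r\to 1}\int_{\Ss}(P_r*\varphi)\,d\nu$ for $\varphi\in C(\Ss)$ by the approximate-identity property, and this limit depends only on the harmonic function $\int P(\cdot,\xi)\,d\nu$.

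The main obstacle is not any single deep step but rather assembling the approximate-identity estimates cleanly in (c) and making the weak-compactness limit in (b) rigorous — in particular justifying the interchange of the limit $r_n\to 1$ with the integral against $P(z,\cdot)$, which is exactly where the continuity of $\xi\mapsto P(z,\xi)$ on the compact boundary $\Ss$ (valid precisely because $z$ stays in the open disk) is used. Everything else is either a direct computation (harmonicity of $P$, the normalization $\int P\,d\lambda_\Ss = 1$) or an appeal to the maximum principle.
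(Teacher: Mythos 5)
Your outline is the standard classical proof (harmonicity of $P(\cdot,\xi)$ as the real part of $\frac{\xi+z}{\xi-z}$, the approximate-identity argument for the Dirichlet problem, and the Herglotz weak-$*$ compactness argument for the representation of positive harmonic functions), and it is correct, including the slightly delicate points you flag: the interchange of the limit $r_n\to 1$ with integration against $P(z,\cdot)$, and the recovery of $\nu^h$ via $P(r\xi,\eta)=P(r\eta,\xi)$ in the uniqueness step. The paper states this theorem without proof as classical background in its expository Section 2, so there is no argument to compare against; your proposal fills that gap in the textbook way and nothing in it conflicts with how the theorem is used later.
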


\medskip

\textbf{B. Homogeneous tree}

\smallskip

We think of a graph as a set of vertices, equipped with a symmetric
neighbourhood relation $\sim$. An edge is a pair (usually considered 
un-oriented) $e=[x,y]$ with $x \sim y$. 
Now, we consider the homogeneous tree $\T = \T_{\qq}\,$, where every vertex
has $\qq+1 \ge 3$ neighbours. The discrete Laplacian $\Lap_{\T}$ acts on
functions $f: \T \to \R$ by
\begin{equation}\label{eq:discLap}
\Lap_{\T}f(x) = \frac{1}{\qq+1}\sum_{y \sim x} \bigl(f(y) - f(x)\bigr)\,.
\end{equation}
It is related with \emph{simple random walk} (SRW) on $\T$ in the same way as
the above Laplacians are the infinitesimal generators of Euclidean and
hyperbolic \emph{Brownian motion}. (Hyperbolic BM is Euclidean BM slowed
down close to the boundary of the hyperbolic disk.) SRW  is
the Markov chain $(Z_n)_{n \ge 0}$ on $\T$ where $Z_n$ is the random position 
at discrete time $n$ of the particle, which moves from the
current vertex $x$ to any of its neighbours $y$ with equal probability 
$p(x,y) =1/(\qq+1)$, while $p(x,y) =0$ if $x \not\sim y$. This gives rise
to the transition operator $Pf(x) = \sum_y p(x,y)f(y)$, and 
$\Lap_{\T} = P - I$, where $I$ is the identity operator.
For potential theory on trees, see e.g. {\sc Woess}~\cite{W-Markov}.

\smallskip

For any pair of vertices $x,y$, there is a \emph{geodesic path} $\pi(x,y)$
from $x$ to $y$ without repetitions. The number of edges of that path is
the graph distance
\begin{equation}\label{eq:tree-metric}
\dsf_{\T}(x,y)\,.
\end{equation}
The standard measure on (the vertex set of) $\T$ is the counting measure
$\msf_{\T}(A) = |A|$ ($A \subset \T$).
In comparing $\T$ with $\HH$, $\dsf_{\T}$ and $\msf_{\T}$ correspond to the 
hyperbolic distance and area element
$\rho_{\HH}$ and $\msf_{\HH}$ on $\HH$, respectively.
%Since $\T$ is countable, measures on $\T$ are defined by their atoms, that
%is, they can be identified with non-negative functions.  

\smallskip

Functions $h, u: \T \to \R$ are \emph{harmonic}, resp. \emph{subharmonic},
if $\Lap_{\T}h = 0$, resp.  $\Lap_{\T}u \ge 0$.  
Equivalently, $Ph =h$, resp. $Pu \ge u$, a definition in terms of the arithmetic 
averages over spheres with radius $1$.
Here, it makes no sense
to allow for value $-\infty$, since sets of $\msf_{\T}$-measure $0$ are empty.
%Indeed, a function $u$ which satisfies $Pu \ge u$ and has value $-\infty$
%in some point must be equal to $-\infty$ on the whole of $\T$. 

The Green function of $\Lap_{\T}$ is
\begin{equation}\label{eq:Green-T}
G_{\T}(x,y) = \frac{\qq}{\qq-1}\, \qq^{-\dsf_{\T}(x,y)}\,,\quad x,y \in \T\,.
\end{equation} 
The potential of a non-negative real function $f$ on $\T$ is
$$
G_{\T}f(x) = \sum_{y \in \T} G_{\T}(x,y) \,f(y)\,.
$$
Since $\T$ is countable, measures on $\T$ are defined by their atoms, that
is, they can be identified with non-negative functions. Thus, the Riesz measure
of a subharmonic function $u$ can be identified with the function  
\begin{equation}\label{eq:RieszT}
\mu^u = \Lap_{\T} u = Pu - u\,.
\end{equation}
More precisely, the function $Pu-u$ should be understood as the density of the 
Riesz measure $\mu^u$ with respect to the counting measure $\msf_{\T}\,$.
%
%, so that
%$$
%\frac{d\mu^u}{d\msf_{\T}}(x) = \Lap_{\T}u(x)\,.
%$$
If $u$ has a harmonic majorant, then its Riesz decomposition reads
$$
u = h - G_{\T}\mu^u\,,  \quad \text{where} \quad h(x) = \lim_{n \to \infty} P^n u(x)
$$ 
is the smallest harmonic majorant of $u$. Again, if there is no harmonic majorant, then
the statement of the Riesz decomposition theorem is a little bit more involved. In  the
Markov chain (= Discrete Potential Theory) literature, the only source for the latter
seems to be \cite{W-Riesz}, but it will not be needed here.

\smallskip

We  next describe the \emph{end compactification} of $\T$. 
A \emph{ray} or \emph{geodesic ray} is a one-sided infinite sequence
$\pi= [x_0, x_1, x_2,\dots]$ of vertices such that $x_n \sim x_{n-1}$ and $x_n \ne x_m$  
for all $n, m$, $n \ne m$. Two rays are called \emph{equivalent}, if 
they differ only by finite initial pieces. An end
of $\T$ is an equivalence class of rays. The set of all ends is the boundary $\partial \T$.
We set $\wh \T = \T \cup \partial \T$. 
For every vertex $x \in \T$ and  every $\xi \in \partial \T$, there is precisely 
one geodesic ray $\pi(x,\xi)$ starting at $x$ that represents $\xi$. 
Analogously, for any two disctinct ends $\xi, \eta$, there is a unique two-sided 
\emph{geodesic} $\pi(\xi,\eta) = [\dots, -x_2, -x_1, x_0, x_1, x_2, \dots]$ such that
$[x_k, x_{k-1}, x_{k-2}, \dots]$ and $[x_k, x_{k+1}, x_{k+2}, \dots]$ are rays representing
$\xi$ and $\eta$, respectively. 

We now pursue the line followed above by an exponential change of the metric of $\HH$, see 
\eqref{eq:metrel}. A natural choice is as follows.  We fix a root vertex $o \in \T$. 
For $z \in \wh\T$, we denote $|z| = \dsf_{\T}(o,z)$, with value $\infty$ if $z \in \partial \T$. 
%% For any edge $e$ of $\T$, we  introduce its ``length element'' 
%% $$
%% d_{\T}(e) =
%% (\qq-1) q^{-\min\{|x|, |y|\}}\,, \quad \text{if}\;e=[x,y].
%% $$  
%% If $z, w \in \wh \T$, then we define the metric
%% \begin{equation}\label{eq:ell}
%% \rho_{\T}(z,w) = \begin{cases}\displaystyle
%% \sum_{e \in \pi(z,w)} d_{\T}(e) \,,&\text{if}\; z\ne w\,,\\
%% 0 \,,&\text{if}\; z=w\,.
%% \end{cases}
%% \end{equation}
%% Here, the notation ``$e \in \pi(z,w)$'' refers to summing over all edges of
%% that geodesic. 
For $w, z \in \wh\T$ , we define their \emph{confluent}
$w \wedge z$ with respect to $o$ as the last common element on the
geodesics $\pi(o,w)$ and $\pi(o,z)$. This is a vertex, unless $z=w \in \partial \T$.
We let
\begin{equation}\label{eq:ultra}
\rho_{\T}(w, z) = \begin{cases}q^{-|w \wedge z|} \,,&\text{if}\; z\ne w\,,\\
                                              0 \,,&\text{if}\; z=w\,.
\end{cases}
\end{equation}
This is an ultra-metric. In the induced topology, $\wh \T$ is compact, and $\T$ is
discrete and dense. Convergence in this topology is as follows: if $\xi \in \partial \T$
then a sequence $(z_n)$ in $\wh \T$ converges to $\xi$ if and only if 
$|\xi \wedge z_n| \to \infty\,$. 

%% It is a straightforward exercise to show that the restriction
%% of $\rho_{\T}$ to the boundary is the habitual ultrametric
%% \begin{equation}\label{eq:ultra}
%% \rho_{\T}(\xi,\eta) =  2\,\qq^{-|\xi \wedge \eta|}\,, \quad \xi,\eta \in \partial \T\,.
%% \end{equation}
%% Usually, the topology of $\wh\T$ is defined equivalently via this metric on the whole of 
%% $\partial \T$, but for underlining the analogy with the disk, the use of the length element
%% $d_{\T}$ is more intuitive. Compare with {\sc Georgakopoulos}~\cite{Geo}.

At this point, we underline that in the ``translation'' from disk to tree,
the graph metric $\dsf_{\T}$ corresponds to the hyperbolic metric 
$\rho_{\HH}\,$, while the metric $\rho_{\T}$ is the one that may be interpreted
to correspond to the Euclidean metric $\dsf_{\DD}\,$. 
The next identity should be compared with \eqref{eq:metrel}.
\begin{equation}\label{eq:metrel1}
\rho_{\T}(x, \partial \T) = \qq^{-|x|}
\quad\text{for}\; x \in \T\,.
\end{equation}

\smallskip

The \emph{Martin kernel} on $\T \times \partial \T$
is defined for $x \in \T$, $\xi \in \partial\T$ as
\begin{equation}\label{eq:martin}
K(x,\xi)
 = \lim_{y\to \xi}\frac{G_{\T}(x,y)}{G_{\T}(o,y)} = \qq^{-\hor_{\T}(x,\xi)}
\end{equation} 
with the \emph{Busemann function}
\begin{equation}\label{eq:busemann1}
\hor_{\T}(x ,\xi) = 
\lim_{y \to \xi} \Bigl(\dsf_{\T}(y,x) - \dsf_{\T}(y,o)\Bigr) = 
\dsf_{\T}(x \wedge \xi,x) - \dsf_{\T}(x \wedge \xi,o).
\end{equation} 
Again, we have a probabilistic interpretation. It is a well-known exercise to
show that SRW on $\T$ converges alsmost surely in the
topology of $\wh \T$ to a limit random variable  $Z_{\infty}$ that takes its values in
$\partial \T$. Let $\nu_x$ be the distribution of $Z_{\infty}\,$, when SRW starts
at vertex $x$. Then $\nu_o = \lambda_{\partial \T}$ is the tree-analogue of
the normalized Lebesgue measure $\lambda_{\Ss}$ on the unit circle:
$\lambda_{\partial \T}$ is the unique probability measure on
$\partial\T$ which is invariant under ``rotations'' of $\T$, that is,
self-isometries of the graph $\T$ which fix the root vertex $o$. 
Connectedness of $\T$ implies that
$\nu_x$ is absolutely continuous with respect to $\lambda_{\partial \T}\,$, and
the Radon-Nikodym-derivative is (realised by) the Martin kernel:
\begin{equation}\label{eq:dens-K}
\frac{d\nu_x}{d\lambda_{\partial \T}}(\xi) = K(x,\xi)\,.
\end{equation}
We have a perfect analogy with Theorem \ref{thm:poissonintegral}.
%regarding the \emph{Poisson-Martin representation} of positive, resp. bounded 
%harmonic functions and the solution of the \emph{Dirichlet problem,}
%which is statement (d).

\begin{thm}\label{thm:poissonmartin} \emph{(a)} For every $\xi \in \partial\T$,
the function $x \mapsto K(x,\xi)$ is harmonic on $\T$.\\[4pt]
\emph{(b)} For every positive harmonic function $h$ on $\T$,
there is a unique Borel measure $\nu^h$ on $\partial \T$
such that 
$$
h(x) = \int_{\partial \T} K(x,\cdot)\,d\nu^h\,.
$$
%\emph{(c)} For every bounded harmonic function $h$ on $\T$,
%there is a unique function 
%$\varphi = \varphi^h \in L^{\infty}(\partial \T, \lambda_{\partial \T})$ 
%such that 
%$$
%h(x) = \int_{\partial \T} P_{\T}(x,\cdot)\,\varphi^h\, d\lambda_{\partial \T}\,.
%$$
\emph{(c) [Solution of the Dirichlet problem]} 
For every continuous function $\varphi$ on $\partial\T$,
$$
h(x) = \int_{\partial \T} \varphi\, d\nu_x
= \int_{\partial \T} K(x,\cdot)\,\varphi\, d\lambda_{\partial\T}\,.
$$
is the unique harmonic function $h$ on $\T$ such that
$$
\lim_{x \to \xi} h(x) = \varphi(\xi)\quad \text{for every}\; \xi \in \partial \T\,.
$$
%It is given by the integral formula of \emph{(c)}.
\end{thm}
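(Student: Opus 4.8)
The plan is to mirror the three parts of Theorem \ref{thm:poissonintegral}, using the explicit formula $K(x,\xi)=\qq^{-\hor_\T(x,\xi)}$ together with the combinatorial structure of $\T$. For part (a), I would verify $\Lap_\T K(\cdot,\xi)=0$ directly. Fix $\xi\in\partial\T$ and $x\in\T$; among the $\qq+1$ neighbours $y$ of $x$, exactly one, say $y_0$, lies on the ray $\pi(x,\xi)$ (so $\hor_\T(y_0,\xi)=\hor_\T(x,\xi)-1$), while the remaining $\qq$ neighbours $y$ satisfy $\hor_\T(y,\xi)=\hor_\T(x,\xi)+1$. Hence $\sum_{y\sim x}K(y,\xi)=\qq^{-\hor_\T(x,\xi)}\bigl(\qq + \qq\cdot\qq^{-1}\bigr)=(\qq+1)K(x,\xi)$, which gives $PK(\cdot,\xi)=K(\cdot,\xi)$, i.e. harmonicity. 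The identity $\hor_\T(x,\xi)=\dsf_\T(x\wedge\xi,x)-\dsf_\T(x\wedge\xi,o)$ from \eqref{eq:busemann1} is what makes this bookkeeping routine.

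For part (c), I would show that $h(x)=\int_{\partial\T}\varphi\,d\nu_x$ solves the Dirichlet problem. Harmonicity of $h$ follows from the Markov property of SRW: conditioning on the first step, $\nu_x=\frac{1}{\qq+1}\sum_{y\sim x}\nu_y$, so $Ph=h$; equivalently one may differentiate under the integral using part (a) and \eqref{eq:dens-K}. The boundary limit $\lim_{x\to\xi}h(x)=\varphi(\xi)$ is the key analytic point: given $\ep>0$, choose by continuity of $\varphi$ a neighbourhood of $\xi$ in $\wh\T$ on which $\varphi$ varies by less than $\ep$; such a neighbourhood is a ``branch'' $\{z\in\wh\T:|z\wedge\xi|\ge n\}$ for suitable $n$. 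If $x$ lies deep enough in this branch, then SRW started at $x$ must exit the branch (if at all before converging) through its single boundary vertex at level $n$, and — because $\T_\qq$ is transient with $\qq\ge 2$ — the probability that SRW started far inside ever backtracks to that level tends to $0$. Thus $\nu_x$ concentrates on the branch, giving $|h(x)-\varphi(\xi)|\le\ep+2\|\varphi\|_\infty\cdot(\text{small backtracking probability})$. Uniqueness follows from the maximum principle for harmonic functions on $\T$: a harmonic function with zero boundary limits everywhere is dominated in absolute value by any $\ep$ eventually, hence is a bounded harmonic function vanishing at the boundary, so it is $\equiv 0$ by the optional stopping / Poisson-boundary argument (or by \eqref{eq:dens-K} and part (b)).

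For part (b), the Poisson-type representation of positive harmonic functions, I would invoke the general Martin boundary theory for transient Markov chains: the minimal Martin boundary of SRW on $\T$ is exactly $\partial\T$, each minimal harmonic function is $K(\cdot,\xi)$ (which is where part (a) feeds in, confirming these are genuinely harmonic), and the Poisson--Martin integral representation then furnishes, for each positive harmonic $h$, a unique Borel measure $\nu^h$ on $\partial\T$ with $h=\int_{\partial\T}K(\cdot,\cdot)\,d\nu^h$. The identification of the Martin compactification of $\T_\qq$ with $\wh\T$ is classical (see \cite{W-Markov}); given that, uniqueness of $\nu^h$ is part of the Martin representation theorem, since distinct extremal measures on the minimal boundary give distinct harmonic functions.

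The main obstacle is the boundary-limit assertion in part (c): it requires the quantitative transience estimate that SRW on $\T_\qq$ started at distance $m$ inside a branch returns to the branch vertex with probability $\qq^{-m}$ (the return probability to a neighbour closer to the root along a geodesic is $1/\qq$, and these compound), so one must set up that geometric estimate carefully; everything else is either a finite local computation (part (a)) or a citation of standard Martin boundary theory (part (b)).
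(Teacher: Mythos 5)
The paper states this theorem without proof, presenting it as classical background (the tree analogue of Theorem \ref{thm:poissonintegral}, with \cite{W-Markov} as the reference), and your arguments are exactly the standard ones that justify it: the one-step computation $\sum_{y\sim x}K(y,\xi)=(\qq+1)K(x,\xi)$ for (a), the Poisson--Martin representation over the minimal boundary $\partial\T$ for (b), and for (c) the first-step decomposition $\nu_x=\frac{1}{\qq+1}\sum_{y\sim x}\nu_y$ together with the hitting estimate $F(x,w)=\qq^{-\dsf_{\T}(x,w)}$ to concentrate $\nu_x$ on a branch. Your proposal is correct and consistent with the justification the paper intends.
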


\medskip

\textbf{C. A table of correspondences}\label{sec:table}

As a general ``rule'' of translation from $\HH$ to $\T$, we note that
base $e$ (Eulerian number) has to be replaced by base $\qq$ (branching
 number of the tree).
 
\bigskip

\begin{tabular}{ l | c | l | c   }
hyperbolic plane $\HH$ & ref. number & homogeneous tree $\T$ & ref. number
\\[4pt]
\hline
&&&
\\[-16pt]
\hline
&&&
\\[-8pt]
hyperbolic metric $\rho_{\HH}$ & \eqref{eq:hypmetric} & 
graph metric $\dsf_{\T}$ & \eqref{eq:tree-metric} 
\\[4pt]
\hline
&&&
\\[-8pt]
Euclidean metric $\dsf_{\DD}$ & \eqref{Euclidean-metric} &
length metric $\rho_{\T}$ & \eqref{eq:ultra}
\\[8pt]
\hline
&&&
\\[-8pt]
boundary $\Ss$ (unit circle)& & boundary $\partial \T$ & 
\\[4pt]
\hline
&&&
\\[-8pt]
${\displaystyle \text{compactification} \atop 
\displaystyle \wh \HH = \DD^- = \DD\cup\Ss}$ & & 
${\displaystyle \text{compactification} \atop 
\displaystyle \wh \T = \T \cup \partial \T}$ & 
\\[8pt]
\hline
&&&
\\[-8pt]
hyperbolic measure $\msf_{\HH}$ &  \eqref{eq:hypmeas} &
counting measure $\msf_{\T}$ & 
\\[4pt]
\hline
&&&
\\[-8pt]
Lebesgue measure $\msf_{\DD}$ & \eqref{eq:Lebmeas} & $\qq^{-2|x|}\,d\msf_{\T}(x)$ 
& \eqref{eq:Lebmeas}
\\[4pt]
\hline
&&&
\\[-8pt]
${\displaystyle \text{normalised arc measure}} \atop 
{\displaystyle \lambda_{\Ss} \text{ on } \Ss}$& 
&
${\displaystyle \text{rotation invariant measure}} \atop 
{\displaystyle \lambda_{\partial\T} \text{ on } \partial\T}$& 
%
%${\displaystyle \text{rotation invariant measure}} \atop 
%{\displaystyle \lambda_{\partial\T} \text{ on } \partial\T^}$
%& 
\\[4pt]
\hline
&&&
\\[-8pt]
hyperbolic Laplacian $\Lap_{\HH}$ & \eqref{eq:hypLap}& 
discrete Laplacian $\Lap_{\T}$ 
& \eqref{eq:discLap}
\\[4pt]
\hline
&&&
\\[-8pt]
Green function $G_{\HH} = G_{\DD}$ & \eqref{eq:green}& 
Green function $G_{\T}$ 
& \eqref{eq:Green-T}
\\[4pt]
\hline
&&&
\\[-8pt]
Poisson kernel & \eqref{eq:poisson}& 
Martin kernel 
& \eqref{eq:martin}
\\[4pt]
\hline
  \end{tabular}

\bigskip

There are many further analogies between analysis, probability, group actions, etc.
on $\DD$ and $\T$. The present introduction is not intended to cover all those aspects.
For further tips of the iceberg, see e.g. {\sc Casadio Tarabusi, Cohen, Kor\'anyi and 
Picardello}~\cite{CCKP}, {\sc Rigoli, Salvatori  and Vignati}~\cite{RSV}, 
{\sc Cohen, Colonna and Singman}~\cite{CCS1}, 
{\sc Atanasi and Picardello}~\cite{AP} or 
{\sc Casadio Tarabusi and Fig\`a-Talamanca}~\cite{CF}, and the references given there. 

\section{Moment conditions and harmonic majorants}\label{sec:harmonic}

Let $\Xx = \DD$ or $\Xx = \T$, with respective boundary $\partial \Xx$ and compactification $\wh \Xx$ 
(see the above table). The boundary carries the metric $\dist$ and measure $\lambda$, where 
$\dist = \dsf_{\DD}$ and $\lambda = \lambda_{\Ss}$ in case of the disk, 
while $\dist = \rho_{\T}$ and $\lambda = \lambda_{\T}$
in case of the tree. Given a subharmonic function $u$ on $\Xx$
and its Riesz measure $\mu^u$, we are interested in finiteness of its \emph{first (boundary) moment}
\begin{equation}\label{eq:momX}
\int_{\Xx} \dist(x,\partial \Xx)\, d\mu^u(x) 
\end{equation}
and variants thereof.
One principal tool is the following lemma.
\begin{lem}\label{lem:momX}
The subharmonic function $u$ has a harmonic majorant on $\Xx$ if and only if $\mu^u$ has finite 
first moment \eqref{eq:momX}. 
\end{lem}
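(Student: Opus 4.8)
The plan is to reduce both directions to the size of the Green potential $G_{\Xx}\mu^u$, via two ingredients. First, $u$ has a harmonic majorant on $\Xx$ if and only if $G_{\Xx}\mu^u$ is a genuine potential, i.e.\ $G_{\Xx}\mu^u\not\equiv\infty$: the forward direction is the Riesz decomposition theorem (\cite[Thm.~4.5.4]{Ran}; cf.\ \eqref{eq:RieszD}, \eqref{eq:RieszT}, and \cite{W-Riesz} on $\T$), which gives $u=h-G_{\Xx}\mu^u$ with $h$ the least harmonic majorant (so $G_{\Xx}\mu^u(z_0)=h(z_0)-u(z_0)<\infty$ whenever $u(z_0)>-\infty$); for the converse one uses that $-G_{\Xx}\mu^u$ is subharmonic with the same Riesz measure $\mu^u$ as $u$, so that $u+G_{\Xx}\mu^u$ has vanishing Laplacian and hence is harmonic, say $=h$, whence $u=h-G_{\Xx}\mu^u\le h$. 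Second, with respect to the base point $o$ the Green function is comparable, near the boundary, to $\dist(\cdot,\partial\Xx)$: on $\T$, by \eqref{eq:Green-T} and \eqref{eq:metrel1} one has the exact identity $G_{\T}(o,x)=\frac{\qq}{\qq-1}\,\qq^{-|x|}=\frac{\qq}{\qq-1}\,\dist(x,\partial\T)$ for all $x$, while on $\DD$, by \eqref{eq:green}, $G_{\DD}(0,z)=-\log|z|\asymp 1-|z|=\dist(z,\partial\DD)$ as $|z|\to1$, and more generally $G_{\DD}(z_0,z)\asymp\dist(z,\partial\DD)$ for $z$ near $\Ss$ and $z_0$ ranging over a fixed interior disk, the constants depending on that disk. (On $\DD$ I would work with the Euclidean picture throughout; by Lemma~\ref{lem:euc-hyp} and the remark after \eqref{greenhyp} it carries the same Riesz measure as the hyperbolic one.)

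\emph{Harmonic majorant $\Rightarrow$ finite moment.} Write $u=h-G_{\Xx}\mu^u$ and pick $z_0$ with $u(z_0)>-\infty$, so $G_{\Xx}\mu^u(z_0)<\infty$. On $\T$ this is already the assertion, because $G_{\T}\mu^u(o)=\frac{\qq}{\qq-1}\int_{\T}\dist(x,\partial\T)\,d\mu^u(x)$. On $\DD$ I would split $\DD=U\cup K$, with $U=\{1-|z|<\ep_0\}$ a boundary neighbourhood on which $\dist(z,\partial\DD)\le C\,G_{\DD}(z_0,z)$ and $K=\DD\setminus U$ compact; since $\mu^u$ is a Radon measure it is finite on $K$, so $\int_{\DD}\dist(z,\partial\DD)\,d\mu^u(z)\le C\,G_{\DD}\mu^u(z_0)+\bigl(\max_{K}\dist(\cdot,\partial\DD)\bigr)\mu^u(K)<\infty$.

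\emph{Finite moment $\Rightarrow$ harmonic majorant.} By the first ingredient it suffices to exhibit one point at which $G_{\Xx}\mu^u$ is finite. On $\T$ the identity above gives $G_{\T}\mu^u(o)<\infty$ at once. On $\DD$, fix $\ep_0$ so that $G_{\DD}(z_0,z)\le C\,(1-|z|)$ for all $z\in U=\{1-|z|<\ep_0\}$ and all $z_0$ in a small interior disk $B$, and set $K=\DD\setminus U$. For $z_0\in B$,
\[
G_{\DD}\mu^u(z_0)=\int_{U}G_{\DD}(z_0,z)\,d\mu^u(z)+\int_{K}G_{\DD}(z_0,z)\,d\mu^u(z),
\]
where the first integral is $\le C\int_{\DD}\dist(z,\partial\DD)\,d\mu^u(z)<\infty$ by hypothesis. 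For the second, $\mu^u|_{K}$ is a finite measure and $\sup_{z\in K}\int_{B}G_{\DD}(z_0,z)\,dz_0<\infty$ because $G_{\DD}$ has only a logarithmic singularity on the diagonal; Fubini then gives $\int_{B}\bigl(\int_{K}G_{\DD}(z_0,z)\,d\mu^u(z)\bigr)dz_0<\infty$, hence $\int_{K}G_{\DD}(z_0,z)\,d\mu^u(z)<\infty$ for Lebesgue-a.e.\ $z_0\in B$, and for any such $z_0$ we obtain $G_{\DD}\mu^u(z_0)<\infty$.

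\emph{Where the work is.} The only genuinely non-bookkeeping step is this last one: since $\mu^u$ is merely a Radon measure it may concentrate at an interior point, so finiteness of the \emph{boundary tail} of $G_{\DD}\mu^u$ does not by itself exclude $G_{\DD}\mu^u\equiv\infty$. The resolution above rests on the classical fact that the Green potential of a compactly supported finite measure on $\DD$ is finite quasi-everywhere; alternatively one could run $u$ through the Poisson--Jensen formula on an exhaustion $D_R\nearrow\DD$. On $\T$ this difficulty evaporates -- there are no polar sets, $G_{\T}\mu^u(o)$ is finite as soon as the moment is, and then $G_{\T}\mu^u(x)\le\qq^{\dsf_{\T}(o,x)}\,G_{\T}\mu^u(o)<\infty$ for every $x$ -- which is why the tree case is, here as elsewhere, the transparent one; everything else is just the comparison $G_{\Xx}(o,\cdot)\asymp\dist(\cdot,\partial\Xx)$ near $\partial\Xx$.
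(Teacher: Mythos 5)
Your proof is correct and follows essentially the same route as the paper: both directions are reduced to finiteness of the Green potential $G_{\Xx}\mu^u$ via the Riesz decomposition, combined with the comparison $G_{\Xx}(o,\cdot)\asymp\dist(\cdot,\partial\Xx)$ near the boundary (an exact identity on $\T$). The only difference is that where the paper simply cites \cite[Thm.~4.2.5]{AG} for the implication ``finite moment $\Rightarrow G_{\DD}\mu^u\not\equiv\infty$'', you supply the underlying argument (splitting off a compact set and using that the potential of a finite, compactly supported measure is finite quasi-everywhere), and you are slightly more careful than the paper about the possibility $u(0)=-\infty$ when choosing the point of evaluation.
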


\begin{proof}
Our function $u$ has a harmonic majorant if and only if $G_{\Xx}\mu^u$ is a potential,
that is, it is finte at some $x \in \Xx$. 

\smallskip

If $\Xx = \DD$ then {\sc Armitage and Gardiner}~\cite[Thms. 4.2.4 and 4.2.5]{AG}
show that $G_{\Xx}\mu^u$ is a potential if and only if \eqref{eq:momX} holds.

\smallskip
   
If $\Xx = \T$ then by \eqref{eq:Green-T}, $G_{\T}(x,y) \le q^{|x|} G_{\T}(o,y)$ for all $x,y$, so that 
$G_{\T}\mu^u$ is finite at $x \in \T$ if and only if $G_{\T}\mu^u(o) < \infty$. Now
$$
G_{\T}\mu^u(o) = \sum_{x \in \T} \frac{q}{q-1} q^{-|x|} \mu^u(x)
= \frac{q}{q-1} \int_{\T} \rho_{\T}(x, \partial \T) \, d\mu^u(x)\,. \eqno{\qedhere}
$$   
\end{proof}
So in fact what we are going to do is to exhibit a sufficient condition for
a subharmonic function on $\Xx=\DD$, resp. $\Xx = \T$, to possess a (global or restricted)
harmonic majorant, even if it is not bounded above. 

\begin{thm}\label{thm:main1} Let $u$ be a subharmonic function on $\Xx$
and consider the closed set
$$
E = \Bigl\{ \xi \in \partial \Xx : \limsup_{\Xx \ni x \to \xi} u(x) = \infty \Bigr\}.
$$
Suppose that $\Psi: [0\,,\,\diam(\Xx)] \to [0\,,\,\infty]$ is 
a continuous, decreasing function with 
$$
\Psi(t) = \infty \iff t=0 \AND \lim_{t\to 0} \Psi(t) = \infty\,,
$$
and that
$$
u(x) \le \Psi \bigl(\dist(x,E)\bigr) \quad \text{for all}\; x \in \Xx\,.
$$
If 
\begin{equation}\label{eq:fin-int}
\int_{\partial\Xx} \Psi \bigl(\dist(\xi,E)\bigr) \,d\lambda(\xi) < \infty\,,
\end{equation}
then $u$ has a finite harmonic majorant, and the Riesz measure $\mu^u$ has finite first 
boundary moment.
\end{thm}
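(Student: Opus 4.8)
The plan is to reduce everything to the single claim that $G_{\Xx}\mu^u(o)<\infty$, where $o$ is the reference point ($o=0$ for $\Xx=\DD$, the root for $\Xx=\T$). Indeed, the computations in the proof of Lemma~\ref{lem:momX} already show that $G_{\Xx}\mu^u(o)$ is finite exactly when $\int_{\Xx}\dist(x,\partial\Xx)\,d\mu^u(x)<\infty$, and then Lemma~\ref{lem:momX} itself provides the finite harmonic majorant (concretely, $u+G_{\Xx}\mu^u$). So the whole task is to bound $G_{\Xx}\mu^u(o)$.

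For this I would use the elementary fact that the spherical means of $u$ about $o$ increase to $u(o)+G_{\Xx}\mu^u(o)$. Set
\[
m(\rho)=\Av_\rho u(0)\ \ (0<\rho<1)\quad\text{if}\ \Xx=\DD,\qquad
m(n)=\frac{1}{(\qq+1)\qq^{\,n-1}}\sum_{|y|=n}u(y)\ \ (n\ge 1)\quad\text{if}\ \Xx=\T.
\]
The Riesz decomposition of $u$ on the relatively compact balls $B$ exhausting $\Xx$ (equivalently, optional stopping of Brownian motion, resp.\ the simple random walk, at the exit time of $B$) gives $m=u(o)+\int_{B}g_{B}(o,\cdot)\,d\mu^u$, where $0\le g_{B}\uparrow G_{\Xx}(o,\cdot)$ as $B\uparrow\Xx$; hence $m$ is non-decreasing and $\lim m=u(o)+G_{\Xx}\mu^u(o)$. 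Therefore it suffices to bound $m$ uniformly, and on each sphere I may insert the hypothesis $u\le\Psi\bigl(\dist(\cdot,E)\bigr)$.

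The \emph{decisive} step is then to compare $\dist(x,E)$, for $x$ on a sphere, against $\dist(\xi,E)$, for $\xi\in\partial\Xx$. On the tree this is immediate and lossless: if $\xi$ lies in the shadow $S_x=\{\xi\in\partial\T: x\in\pi(o,\xi)\}$ of a vertex $x$, then by ultrametricity the distance to $E$ cannot increase as one moves outward along a geodesic, so $\rho_{\T}(\xi,E)\le\rho_{\T}(x,E)$, whence $\Psi(\rho_{\T}(x,E))\le\frac{1}{\lambda(S_x)}\int_{S_x}\Psi(\rho_{\T}(\cdot,E))\,d\lambda$; summing over $|y|=n$, whose shadows partition $\partial\T$ and all have mass $\tfrac{\qq}{\qq+1}\qq^{-n}$, gives $m(n)\le\int_{\partial\T}\Psi(\rho_{\T}(\cdot,E))\,d\lambda<\infty$ for every $n$. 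On the disk one loses a constant: since $\dsf_{\DD}(\rho e^{\im\theta},\Ss)=1-\rho$, the triangle inequality yields $\dist(\rho e^{\im\theta},E)\ge\max\{\,1-\rho,\ \dist(e^{\im\theta},E)-(1-\rho)\,\}\ge\tfrac12\dist(e^{\im\theta},E)$, so $m(\rho)\le\int_{\Ss}\Psi\bigl(\tfrac12\dist(\cdot,E)\bigr)\,d\lambda$. Finally, because the closed $\de$-neighbourhoods $E_{\de}$ of the compact set $E\subset\Ss$ obey a doubling bound $\lambda(E_{2\de})\le C_0\,\lambda(E_{\de})$ (each component of $E_{\de}$ has length at least $c\,\de$, so enlarging it by $\de$ changes the total length by at most a bounded factor), a layer-cake computation gives $\int_{\Ss}\Psi(\tfrac12\dist(\cdot,E))\,d\lambda\le C_0\int_{\Ss}\Psi(\dist(\cdot,E))\,d\lambda<\infty$. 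In both cases $\sup m<\infty$, hence $G_{\Xx}\mu^u(o)<\infty$, as wanted.

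The part I expect to require the most care is this comparison on the disk. Passing from the seemingly natural assertion ``$u\le$ Poisson integral of $\Psi(\dist(\cdot,E))$'' to the weaker but \emph{robust} statement about increasing spherical means is what allows one to avoid any regularity hypothesis on $\Psi$; the apparently harmless geometric loss of the factor $\tfrac12$ is then controlled only by the doubling of neighbourhoods of a compact subset of the circle. One must also be somewhat careful in stating the monotone identity $m=u(o)+\int_{B}g_{B}(o,\cdot)\,d\mu^u$ with $g_{B}\uparrow G_{\Xx}(o,\cdot)$, and, for $\Xx=\T$, in identifying the limit with $G_{\T}\mu^u(o)$.
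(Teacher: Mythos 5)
Your argument is correct, but it takes a genuinely different route from the paper's. The paper fixes $t>0$, solves the Dirichlet problem with boundary data $\min\{\Psi(t),\Psi(\dist(\cdot,E))\}$ to obtain a harmonic function $h^{(t)}$, uses the harmonic measure estimate of Lemma \ref{lem:nuw} to get $h^{(t)}\ge\Psi(t)/c_{\Xx}$ on $\Gamma^{(t)}$, and then applies the maximum principle on the domain $\Xx^{(t)}$ to conclude $u\le c_{\Xx}h^{(t)}$ there; letting $t\to0$ exhibits $c_{\Xx}\int_{\partial\Xx}K(\cdot,\xi)\,\Psi(\dist(\xi,E))\,d\lambda(\xi)$ as an explicit harmonic majorant, after which Lemma \ref{lem:momX} gives the moment. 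You instead bound the increasing spherical means of $u$ about $o$ directly, replacing the maximum principle by the pointwise geometric inequality $\dist(z,E)\ge\tfrac12\dist(z/|z|,E)$ (lossless on the tree by ultrametricity, since $\rho_{\T}(\xi,E)\le\rho_{\T}(x,E)$ for $\xi$ in the shadow of $x$), and you recover the lost factor $2$ on the disk through the doubling bound $\lambda(E^{(2\de)})\le C_0\,\lambda(E^{(\de)})$ — which does hold with a universal constant, because each component of $E^{(\de)}$ contains a full chordal $\de$-arc about a point of $E$, so there are at most $O(1/\de)$ components and each gains at most $O(\de)$ in measure when passing to $E^{(2\de)}$. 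Your route is more elementary and self-contained: it avoids Lemma \ref{lem:nuw}, the decomposition $\partial\Xx^{(t)}=\partial_\infty\Xx^{(t)}\cup\Gamma^{(t)}$, and the maximum principle on the (possibly irregular) domains $\DD^{(t)}$. What it gives up is precisely the localized inequality \eqref{eq:u-le-ht}, $u\le c_{\Xx}h^{(t)}$ on $\Xx^{(t)}$, which is the ingredient the paper reuses to start the proof of Theorem \ref{thm:main2}; your argument yields only the global conclusion. Two small points to tidy: (i) on the disk the identity $m(\rho)=u(0)+\int_{|w|<\rho}\log\frac{\rho}{|w|}\,d\mu^u(w)$ presupposes $u(0)>-\infty$ — either recentre at a point where $u$ is finite, work with the differences $m(\rho)-m(\rho_0)$, or invoke directly that boundedness of the spherical means about one point already produces the least harmonic majorant; (ii) in the layer-cake step one ends up comparing $\lambda(\{\dist\le 2s\})$ with $\lambda(\{\dist<s\})$, which costs only a further harmless enlargement of the doubling constant.
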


We note that for condition \eqref{eq:fin-int} it is necessary 
that $\lambda(E)=0$. 
For the proof of the theorem, we shall work with the function
\begin{equation}\label{eq:h}
h = \int_{\partial \Xx} K_{\Xx}(\cdot,\xi)\,  \Psi \bigl(\dist(\xi,E)\bigr) \,d\lambda(\xi)\,,
\end{equation}
where $K_{\Xx}$ is the Poisson kernel \eqref{eq:poisson} when $\Xx = \DD$, and the Martin 
kernel \eqref{eq:martin} when $\Xx = \T$. Since for fixed $x \in \Xx$, the function
$\xi \mapsto K_{\Xx}(x,\xi)$ is continuous on $\partial \Xx$ (whence bounded), 
the function $h$ is finite and harmonic on $\Xx$ under condition \eqref{eq:fin-int}. 

We need some preparations. 
We let $0 < t \le \max\{\dist(x,E) : x \in \Xx\}$ and consider the sets 
$$
E^{(t)} = \{ \xi \in \partial\Xx: \dist(\xi, E) \le t\} \AND
E^{(t)}_* = \{ \xi \in \partial\Xx: \dist(\xi, E) > t\}\,,
$$
and, for $0 < t < 1$, the set  $\Xx^{(t)}$ which is the component of the origin of the set
$\{ x \in \Xx : \dist(x,E) > t \}\,.$
\\[5pt]
\textbf{Disk case:} $\DD^{(t)}$ (denoted $\Omega_t$ in \cite{FaGo1}) is an open domain, 
and its boundary is
$$
\partial \DD^{(t)} = \partial_{\infty}\DD^{(t)} \,\cup\, \Gamma^{(t)}, 
\quad\text{where }\; \partial_{\infty}\DD^{(t)} \subset \overline{E^{(t)}_*} \;\text{ and }\; 
\Gamma^{(t)} \!= \Gamma^{(t)}_{\DD} = \{ z \in \DD: \dsf_{\DD}(z, E) = t\}.
$$
The sets $E^{(t)}$ and $\partial_{\infty}\DD^{(t)}$ are both unions of finitely many closed arcs 
on $\Ss$ and meet at finitely many endpoints of those arcs. $\partial_{\infty}\DD^{(t)}$ 
may be a strict subset of the closure of $E^{(t)}_*\,$, because some arcs of the latter
set can be the boundary of a different component of 
$\{ z \in \DD : \dsf_{\DD}(z,E) > t \}$. (The latter can arise as ``triangular'' regions 
bounded by an arc of $\Ss$ and of arcs of two intersecting circles $\{ z : |z-\zeta_j| = t\}$,
where $\zeta_j \in E$, $j=1,2$.) 
\\[5pt]
\textbf{Tree case:} The origin is of course the root vertex of $\T$. The metric 
$\dist = \rho_{\T}$
takes only the countably many values $\qq^{-k}$, $k \ge 0$ (integer). For $0 < t < 1$
let $k \ge 1$ be the integer such that 
\begin{equation}\label{eq:kt}
\qq^{-k} \le t < \qq^{-(k-1)}\,,\quad k=k(t).
\end{equation}
For any vertex $y \in \T$, we consider the \emph{branch} of $\T$
at~$y$. This is the subtree (induced by)
$$
\mathrm{T}_y = \{ u \in \T : y \in \pi(o,u) \}.
$$
Its boundary $\partial \mathrm{T}_y \subset \partial \T$ consists of those ends which are
represented by geodesics that lie entirely within $\mathrm{T}_y\,$. Note that the open-compact 
sets $\partial \mathrm{T}_y\,$, $y \in \T$, are a basis of the topology of $\partial \T$.  
Given $t$, let $k = k(t)$ and consider the set
$$
\Gamma^{(t)} = \Gamma^{(t)}_{T} =
\{ y \in \T : |y| = k \,,\; \partial \mathrm{T}_y \cap E \ne \emptyset \}\,.
$$
We have 
$$
E^{(t)} = E^{(t)}_{\T} = \bigcup _{y \in \Gamma^{(t)}} \partial \mathrm{T}_y\,.
$$%
For small $t$ $\equiv$ large $k=k(t)\,$, only few vertices $y$ with $|y|=k$ belong to $\Gamma^{(t)}$: 
as $t \to 0$ $\equiv$ $k \to \infty$, 
we have
$$
\frac{|\Gamma^{(t)}|}{|\{ y \in \T : |y| = k(t)\}|} = 
\lambda_{\partial \T}\bigl(E^{(t)}\bigr) 
\to \lambda_{\partial \T}(E)=0.
$$
When $\Xx=\T$, the set $\T^{(t)}$ is the subtree of $\T$ obtained 
by chopping off each branch $\mathrm{T}_y\,$, $y \in \Gamma^{(t)}$, that is,
$$
\T^{(t)} = \T \setminus \bigcup\nolimits_{y \in \Gamma^{(t)}} \mathrm{T}_y\,.
$$
The \emph{boundary} of this truncated tree is
$$
\partial\T^{(t)} =  \partial_{\infty}\T^{(t)} \cup \Gamma^{(t)}_{\T}\,,\quad \text{where}\quad
\partial_{\infty}\T^{(t)} = E^{(t)}_*\,,
$$ 
while  $\Gamma^{(t)}$ is the outer vertex boundary of $\T^{(t)}$: it consists of 
those vertices in the complement that have a neighbour (here: precisely one 
neighbour) in $\T^{(t)}$. In the topology of $\wh \T$, we have the compact subspaces 
$\wh \T^{(t)} =  \T^{(t)} \cup \partial \T^{(t)}$ and the boundary  
$\partial \T^{(t)}$. 

\smallskip

We shall need the following simple estimate.

\begin{lem}\label{lem:nuw}
For $x \in \Xx$, consider the harmonic measure $\nu_x$ on $\partial \Xx\,$; see
\eqref{eq:dens-P}, resp. \eqref{eq:dens-K}. Then
$$
\text{for }\; y \in \Gamma^{(t)}\,,\quad  \nu_y(E^{(t)}) \ge 1/c_{\Xx} \;=
\begin{cases}
1/3\,, &\text{if} \;\ \Xx=\DD\,,\\ 
\qq/(\qq+1)\,, &\text{if} \;\ \Xx=\T\,.
\end{cases}
$$
\end{lem}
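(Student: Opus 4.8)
I would treat the two cases separately, in each of them exhibiting inside $E^{(t)}$ a set whose $\nu_y$–measure can be computed exactly and then bounding that value; nothing beyond the Poisson/Martin representations recorded above is needed.

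\emph{Tree case.} Let $y\in\Gamma^{(t)}$, so $|y|=k=k(t)\ge1$ and $\partial\mathrm{T}_y\subseteq E^{(t)}$, whence $\nu_y(E^{(t)})\ge\nu_y(\partial\mathrm{T}_y)$. For every $\xi\in\partial\mathrm{T}_y$ the geodesic $\pi(o,\xi)$ passes through $y$, so $y\wedge\xi=y$, and \eqref{eq:busemann1} gives $\hor_{\T}(y,\xi)=-k$; hence by \eqref{eq:martin} the Martin kernel $K(y,\cdot)$ is identically $\qq^{k}$ on $\partial\mathrm{T}_y$. Since the $k$–sphere around $o$ is permuted transitively by the rotations, $\lambda_{\partial\T}$ is equidistributed over the branches $\partial\mathrm{T}_{y'}$ with $|y'|=k$, so $\lambda_{\partial\T}(\partial\mathrm{T}_y)=\qq^{1-k}/(\qq+1)$; by \eqref{eq:dens-K} this yields $\nu_y(\partial\mathrm{T}_y)=\qq^{k}\cdot\qq^{1-k}/(\qq+1)=\qq/(\qq+1)$.

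\emph{Disk case.} Let $y\in\Gamma^{(t)}$. Since $E$ is compact, pick $\zeta\in E\subset\Ss$ with $|y-\zeta|=\dsf_{\DD}(y,E)=t$; after a rotation assume $\zeta=1$. On the arc $A=\{\xi\in\Ss:|\xi-1|\le t\}$ one has $\dsf_{\DD}(\xi,E)\le|\xi-1|\le t$, so $A\subseteq E^{(t)}$ and $\nu_y(E^{(t)})\ge\nu_y(A)$. I would then compute $\nu_y(A)$ exactly via the Cayley map $z\mapsto w=(1-z)/(1+z)$, which sends $\DD$ conformally onto $\{\operatorname{Re}w>0\}$, sends $\Ss$ onto $i\R$ with $e^{i\theta}\mapsto-i\tan(\theta/2)$, hence sends $A$ onto the symmetric segment $i[-b,b]$ of the imaginary axis with $b=t/\sqrt{4-t^{2}}$, and sends $y$ (subject to $|y-1|=t$) to a point $w=u+iv$ with $u>0$ satisfying $u^{2}+v^{2}-b^{2}=2b^{2}u$. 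By conformal invariance of harmonic measure and the elementary formula expressing half–plane harmonic measure of a boundary segment as $\tfrac1\pi$ times the angle it subtends,
\[
\nu_y(A)=\frac1\pi\Bigl(\arctan\frac{b+v}{u}+\arctan\frac{b-v}{u}\Bigr),
\]
and since $(b+v)(b-v)/u^{2}=(b^{2}-v^{2})/u^{2}<1$ by the above relation, the $\arctan$ addition theorem collapses this to $\tfrac1\pi\arctan(1/b)=\tfrac1\pi\arctan\bigl(\sqrt{4-t^{2}}/t\bigr)$ — remarkably a value independent of where $y$ sits on $\{|z-1|=t\}$. Finally, $y\in\Gamma^{(t)}\subset\partial\DD^{(t)}$ with $0\in\DD^{(t)}$ forces $t<\dsf_{\DD}(0,E)=1$, so $\sqrt{4-t^{2}}/t\ge\sqrt3$ and
\[
\nu_y(E^{(t)})\ \ge\ \nu_y(A)\ =\ \frac1\pi\arctan\frac{\sqrt{4-t^{2}}}{t}\ \ge\ \frac1\pi\arctan\sqrt3\ =\ \frac13.
\]

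\emph{The main obstacle.} The tree case is essentially bookkeeping with the Martin kernel. The substance is the disk estimate, and its crux is the (a priori surprising) fact that $\nu_y(A)$ does not depend on the position of $y$ along the circular arc $\Gamma^{(t)}\cap\{|z-\zeta|=t\}$ — it is the Cayley reduction together with the $\arctan$ identity that makes this visible and fixes the constant at exactly $\tfrac13$. I would also flag that $\tfrac13$ is sharp precisely because $t<1$ here (letting $t\uparrow1$ with $E=\{\zeta\}$ and $y$ tending to an endpoint of $\Gamma^{(t)}$ drives $\nu_y(E^{(t)})$ down to $\tfrac13$), and that this restriction $t<1$ is built into the very definition of $\Gamma^{(t)}$ as part of $\partial\DD^{(t)}$, not an extra hypothesis.
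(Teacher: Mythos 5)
Your proof is correct and follows essentially the same route as the paper: in the tree case you exploit the equidistribution of $\nu_y$ over the $\qq$ sub-branches of $\mathrm{T}_y$ (via the Martin kernel rather than by symmetry, but with the same content), and in the disk case you bound $\nu_y(E^{(t)})$ from below by the harmonic measure of the very same arc $\{\xi\in\Ss:|\xi-\zeta|\le t\}$ and use that this measure equals $\alpha/\pi$, where $\alpha$ is the angle at which the circle $\{|z-\zeta|=t\}$ meets $\Ss$, which exceeds $\pi/3$ for $t<1$. The only difference is that you derive this ``angle $=\pi\cdot$harmonic measure'' fact explicitly via the Cayley map, whereas the paper simply cites it from Garnett; your computation, and the observation that $\nu_y(A)$ is constant along the circle $|z-\zeta|=t$, is a correct self-contained substitute for that citation.
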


\begin{proof}
$\quad$%\\[5pt]
\textbf{A. Disk case.} 
For $y \in \Gamma^{(t)}_{\DD}$ there is $\zeta = \zeta_y \in E$ such that 
$|y - \zeta_y| = \dsf(y,E) = t$. Consider the arc 
$\gamma_{\zeta} = \{\xi \in \Ss: |\xi - \zeta| \le t \} \subset E^{(t)}$,
as well as the circle $\{ z \in \C : |z - \zeta| = t \}$. At any of the two 
intersection points  of that circle with $\Ss$, the angle $\alpha$ between the tangents 
to the two circles is such that $\pi/2 > \alpha > \pi/3$, as $0 < t < 1$. 
By \cite[p. 13, Fig.1.1]{Gar},  $\nu_y(\gamma_{\zeta})= \alpha/\pi > 1/3$. 
(In \cite{FaGo1}, the lower estimate $1/6$ is used, but apparently also $1/3$ works.)
\\[5pt]
\textbf{B. Tree case.} 
For $y \in \Gamma^{(t)}_{\T}$, we have that $\partial \mathrm{T}_y \subset E^{(t)}$.
We note that $\nu_y$ gives equal mass to the boundaries of each of the $\qq+1$ branches of $\T$
that are emanating from $y$. Among those, $\qq$ branches are part of $\mathrm{T}_y\,$, that
is, $\nu_y(\partial \mathrm{T}_y) = \qq/(\qq+1)$, providing the lower bound. 
\end{proof}

\begin{proof}[\bf Proof of Theorem \ref{thm:main1}]
Consider the function 
$\psi^{(t)}(\xi) = \min \bigl\{ \Psi(t), \Psi\bigl( \dist(\xi,E)\bigr)\bigr\}$
on $\partial \Xx$ and the harmonic function 
$$
h^{(t)}(x) = \int_{\partial \Xx} K(x,\cdot)\, \psi^{(t)}\, d\lambda
= \int_{\partial \Xx}\psi^{(t)}\, d\nu_x\,.
$$
We know from theorems \ref{thm:poissonintegral}, resp. \ref{thm:poissonmartin} that 
it is the solution of the Dirichlet problem on $\Xx$ with continuous boundary function 
$\psi^{(t)}$. We have 
$\psi^{(t)}(\xi) = \Psi(t)$ on $E^{(t)}$, while $\psi^{(t)}(\xi) \le \Psi(t)$
on $E^{(t)}_* \supset \partial_{\infty}\Xx^{(t)}$. Thus,
\begin{equation}\label{eq:ht}
h^{(t)}(x) = \int_{E^{(t)}_*}\Psi\bigl( \dist(\cdot,E)\bigr)\, d\nu_x
+ \Psi(t) \, \nu_x(E^{(t)})\,.
 \end{equation}
Taking boundary limits for points $x$ within $\wh\Xx^{(t)}$, and using Lemma \ref{lem:nuw},
\begin{equation}\label{eq:lim-h} 
\begin{aligned}
\lim_{x \to \xi} h^{(t)}(x) &= \Psi\bigl( \dist(\xi,E)\bigr)\,,
\quad\text{for }\;\xi \in \partial_{\infty}\Xx^{(t)}\,,\AND\\
\lim_{x \to y} h^{(t)}(x) &= h^{(t)}(y) 
\ge \Psi(t)\, \nu_y(E^{(t)}) \ge \Psi(t)/c_{\Xx} 
\quad\text{for }\;y \in \Gamma^{(t)}.
\end{aligned}
\end{equation} 
(In the tree case, since $y$ is an isolated point, the last limit just means stabilisation at $y$.) 
On the other hand, by assumption our subharmonic function $u$ satisfies 
\begin{equation}\label{eq:lim-u} 
\begin{aligned}
\limsup_{x \to \xi}u(x) &\le \Psi\bigl( \dist(\xi,E) \bigr) 
\quad\text{for}\; \xi \in \partial_{\infty}\Xx^{(t)}\,,\AND \\
\limsup_{x \to y} u(y) &\le \Psi(t) \quad\text{for}\; y \in \Gamma^{(t)}\,.
\end{aligned}
\end{equation}
Therefore, again taking boundary limits within $\wh\Xx^{(t)}$, 
$$
\limsup_{x \to \eta} \Bigl(u(x) - c_{\Xx}\, h^{(t)}(x)\Bigr) \le 0 \quad \text{for every}\;
\eta \in \partial \Xx^{(t)}.
$$
Thus, by the maximum principle (which also holds on the tree because $\T^{(t)}$ is a connected graph,
a simple excercise),
\begin{equation}\label{eq:u-le-ht}
u(x) \le c_{\Xx}\, h^{(t)}(x) %\le \frac{\qq+1}{\qq}\, h(x) 
\quad  \text{for every}\; x \in \Xx^{(t)}.
\end{equation}
Having this, we obtain the proposed first moment: 
let  $h(x) = \int_{\partial \Xx} K(x,\cdot) \,\Psi\bigl( \dist(\cdot,E) \bigr) \,d\lambda$
be the harmonic function proposed in \eqref{eq:h}. Then $h^{(t)} \le h$ on $\Xx^{(t)}$ for any
$t$. Given any $x \in \Xx$, we can choose $t < \dist(x,E)$ to see that 
$c_{\Xx}\cdot  h$ is a (finite) harmonic majorant for our subharmonic function~$u$. 
\end{proof}

There is a simple converse to Theorem \ref{thm:main1}.

\begin{pro}\label{pro:converse} Let $u$ be a subharmonic function on $\Xx$, and let
$E$ and $\Psi$ be as in Theorem \ref{thm:main1}. If
$$
u(x) \ge \Psi \bigl(\dist(x,E)\bigr) \quad \text{for all}\; x \in \Xx
$$
and
\begin{equation}\label{eq:infin-int}
\int_{\partial\Xx} \Psi \bigl(\dist(\xi,E)\bigr) \,d\lambda(\xi) = \infty
\end{equation}
then $u$ has no harmonic majorant on $\Xx$, and the first moment
of $\mu^u$ is infinite.
\end{pro}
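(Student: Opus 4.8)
The plan is to argue by contradiction. By Lemma \ref{lem:momX}, $u$ has a harmonic majorant on $\Xx$ if and only if $\mu^u$ has finite first moment, so it suffices to show that $u$ has \emph{no} harmonic majorant; I assume it does and deduce that $\int_{\partial\Xx}\Psi(\dist(\xi,E))\,d\lambda<\infty$, contradicting \eqref{eq:infin-int}. So let $h$ be a harmonic majorant of $u$. Since $h\ge u\ge\Psi(\dist(\cdot,E))\ge 0$ on $\Xx$, the function $h$ is \emph{nonnegative} and harmonic; this positivity is what lets me control $h$ from below by solutions of Dirichlet problems.

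For each $N>0$ I would form the truncated boundary datum $\varphi_N=\min\{N,\Psi(\dist(\cdot,E))\}$ on $\partial\Xx$. The distance function $\dist(\cdot,E)$ is continuous (indeed $1$-Lipschitz) for the metric $\dist$, which induces the topology of $\partial\Xx$ in both the disk and the tree cases, and $\Psi$ is continuous; hence $\varphi_N$ is a continuous function with values in $[0,N]$. Let $h_N=\int_{\partial\Xx}K_\Xx(\cdot,\xi)\,\varphi_N(\xi)\,d\lambda(\xi)$ be the associated Dirichlet solution; by Theorem \ref{thm:poissonintegral}(c), resp. Theorem \ref{thm:poissonmartin}(c), $h_N$ is harmonic, satisfies $0\le h_N\le N$, and $\lim_{x\to\xi}h_N(x)=\varphi_N(\xi)$ for every $\xi\in\partial\Xx$.

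The heart of the argument is the comparison $h\ge h_N$ on $\Xx$, which I would obtain from the maximum principle applied to $v=h_N-h$. This $v$ is harmonic and bounded above by $N$. If $\xi\in\partial\Xx\setminus E$, then $\dist(x,E)\to\dist(\xi,E)>0$ as $x\to\xi$, so the hypothesis $h(x)\ge\Psi(\dist(x,E))$ together with continuity of $\Psi$ gives $\liminf_{x\to\xi}h(x)\ge\Psi(\dist(\xi,E))\ge\varphi_N(\xi)$, whence $\limsup_{x\to\xi}v(x)\le 0$. If $\xi\in E$, then $\dist(x,E)\to 0$ as $x\to\xi$, so $h(x)\ge\Psi(\dist(x,E))\to\infty$ and again $\limsup_{x\to\xi}v(x)=-\infty$. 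Thus $v$ is subharmonic, bounded above, with $\limsup\le 0$ at every boundary point, so the maximum principle --- valid on the bounded domain $\DD$ (e.g. \cite[Thm.~2.3.1]{Ran}) and, by the same elementary argument as in the proof of Theorem \ref{thm:main1}, on the connected graph $\T$ --- gives $v\le 0$, i.e. $h_N\le h$ on $\Xx$.

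It then remains to evaluate at the origin $o$. Since $K_\Xx(o,\cdot)\equiv 1$ (the Poisson kernel at the centre of $\DD$, resp. the Martin kernel at the root of $\T$), $h_N(o)=\int_{\partial\Xx}\varphi_N\,d\lambda$, so $\int_{\partial\Xx}\varphi_N\,d\lambda\le h(o)<\infty$ for every $N$; letting $N\to\infty$ and using monotone convergence, $\int_{\partial\Xx}\Psi(\dist(\xi,E))\,d\lambda\le h(o)<\infty$, the desired contradiction. (An alternative route to this bound, bypassing the Dirichlet solutions, would be to invoke a Fatou-type boundary-limit theorem for the nonnegative harmonic function $h$ together with its Poisson/Martin representing measure, but the route above stays within the results already stated.) The step I expect to be the main obstacle is the comparison $h\ge h_N$: because $h$ blows up along $E$ it is not bounded above, so the maximum principle has to be applied after forming the bounded-above difference $v=h_N-h$, and one must check carefully that $\limsup_{x\to\xi}v(x)\le 0$ holds at \emph{every} boundary point, including those in $E$.
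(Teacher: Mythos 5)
Your proof is correct and follows essentially the same route as the paper: assume a harmonic majorant $h$ exists, compare it from below with the Dirichlet solution for the truncated boundary datum $\min\{M,\Psi(\dist(\cdot,E))\}$ via the maximum/minimum principle, evaluate at the origin where the kernel is $\equiv 1$, and let $M\to\infty$ by monotone convergence to contradict finiteness of $h(o)$. Your extra care in checking the boundary $\limsup$ at points of $E$ is a detail the paper leaves implicit, but the argument is the same.
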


\begin{proof}
We give a combined proof for $\Xx=\DD$ and $\Xx=\T$.
Suppose that the first moment of $\mu^u$ is finite. Then by Lemma \ref{lem:momX},
$u$ has a (finite) harmonic majorant $h$. Consider the continuous function
$\Psi_M = \min\{\Psi, M\}$. Then for all $x \in \Xx$,
$$
h(x) \ge u(x) \ge \Psi_M \bigl(\dist(x,E)\bigr)
$$
The function 
$$
g_M(x) = \int_{\partial \Xx} K_{\Xx}(\cdot,\xi)\,  \psi_M \bigl(\dist(\xi,E)\bigr) \,d\lambda(\xi)\,,
$$
defined analogously to  \eqref{eq:h}, provides the solution of the Dirichlet problem
on $\Xx$ with boundary data $\psi_M \bigl(\dist(\xi,E)\bigr)$. 
We have 
$$
\liminf_{x \to \xi} \bigl(  h(x) - g_M(x) \bigr) \ge 0 \quad \text{for every }\; 
\xi \in \partial \Xx\,.
$$
By the minimum principle, $h \ge g_M$ on $\Xx$, and in particular, $h(o) \ge g_M(o)$.
Letting $M \to \infty$, monotone convergence yields $h(o)=\infty$,
contradicting finiteness of $h$.
\end{proof}

Next, in a similar spirit to \cite{FaGo1}, we want to extend Theorem \ref{thm:main1} to a 
situation where the integral in \eqref{eq:fin-int} is infinite.
For that purpose,  we shall need an estimate of the Green function 
$G_{\Xx^{(t)}}(x,y)=G_{\Xx^{(t)}}(y,x)$ of $\Xx^{(t)}$. On the disk, this function
is of course well described in the classical potential theory literature.

On the tree, for $x,y \in \T^{(t)}$, it is the expected number of
visits to $y$ of the random walk starting at $x$ before it hits $\Gamma^{(t)}$.
It is natural to define $G_{\T^{(t)}}(x,y)= 0$ when one of $x, y$ lies in $\Gamma^{(t)}$ 
and the other in $\T^{(t)}$. In potential theoretic terms,
$f= G_{\T^{(t)}}(\cdot,y)$ is the smallest non-negative function on $\T^{(t)} \cup \Gamma^{(t)}$ 
satisfying $\Delta_{\T}f(x) = -\delta_y(x)$ for $x \in \T^{(k)}$. This corresponds directly
to the disk situation.

%To complete the picture, we give a quick account in terms of the transition matrices:
%Let $P = \bigl(p(x,y))_{x,y \in \T}$ be the transition matrix of simple random walk on
%$\T$, as defined in \S \ref{sec:disk-tree}.B. Write $P^n = \bigl(p^{(n)}(x,y))_{x,y \in \T}$
%for the $n$-th matrix power. Then $G(x,y) = \sum_{n \ge 0} p^{(n)}(x,y)$.
%Analogously, let $P_{\T^{(k)}} 

\begin{thm}\label{thm:green}
Define $r=r_{\Xx}\,$, $a = a_{\Xx}$ and $b=b_{\Xx}$ for $\Xx=\DD$ or $=\T$ by
$$
r_{\DD} = 7\AND a_{\DD}=b_{\DD}= 18\,,\quad\text{resp.}\quad r_{\T} = 1\,,\; a_{\T} = \qq/(\qq-1) \AND
b_{\T} =1\,.
$$
Let $0 < t < 1/r$. Then for any $x \in \Xx^{(rt)}$, we have
$$
G_{\Xx}(x,o) \ge G_{\Xx^{(t)}}(x,o) \ge \frac{1}{a}\, G_{\Xx}(x,o) \ge \frac{1}{b}\,\dist(x,\partial X)\,,
$$
where $o$ is the origin (root) of $\Xx$.
\end{thm}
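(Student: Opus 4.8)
The plan is to prove the three inequalities in Theorem \ref{thm:green} from right to left, treating the two cases side by side as much as possible. The rightmost inequality $G_{\Xx}(x,o) \ge \frac{1}{b}\,\dist(x,\partial \Xx)$ should be essentially a restatement of the formulas for the Green functions. In the tree case, \eqref{eq:Green-T} gives $G_{\T}(x,o) = \frac{\qq}{\qq-1}\qq^{-|x|} = \frac{\qq}{\qq-1}\rho_{\T}(x,\partial\T)$ by \eqref{eq:metrel1}, so $b_{\T}=1$ works with equality (well, with the constant $\qq/(\qq-1)\ge 1$ to spare, but $b_{\T}=1$ suffices). In the disk case, $G_{\DD}(z,0) = -\log|z| = \log\frac{1}{|z|}$ by \eqref{eq:green}, and one needs the elementary inequality $\log\frac{1}{s} \ge \frac{1}{18}(1-s)$ for $0<s<1$ (indeed $\log\frac1s \ge 1-s$ always holds, so $b_{\DD}=18$ is very generous); since $\dsf_{\DD}(z,\partial\DD) = 1-|z|$, this gives the claim.

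Next I would handle the middle inequality $G_{\Xx^{(t)}}(x,o) \ge \frac{1}{a}G_{\Xx}(x,o)$, which is the heart of the matter. The idea is a last-exit / probabilistic decomposition: writing the random walk (Brownian motion) started at $x$, one compares the Green function of the ambient space with that of the subdomain by accounting for paths that exit $\Xx^{(t)}$, i.e. hit $\Gamma^{(t)}$, and come back to $o$. Concretely, $G_{\Xx}(x,o) = G_{\Xx^{(t)}}(x,o) + \mathbb{E}_x\bigl[\mathbf{1}_{\{\tau < \infty\}} G_{\Xx}(x_\tau, o)\bigr]$ where $\tau$ is the hitting time of $\Gamma^{(t)}$ (in the tree case this is the standard first-entrance-last-exit identity; in the disk it is the strong Markov property for $G_{\DD}$). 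So it suffices to show that for $y \in \Gamma^{(t)}$ one has $G_{\Xx}(y,o) \le C\, G_{\Xx^{(t)}}(x,o)$ with $C = a-1$, uniformly for $x \in \Xx^{(rt)}$. Here is where the choice $x \in \Xx^{(rt)}$ with $r$ slightly larger than $1$ enters: it forces $x$ to be ``well inside'' the subdomain so that the walk from $x$ has a decent chance of reaching $o$ before $\Gamma^{(t)}$, giving a lower bound on $G_{\Xx^{(t)}}(x,o)$; meanwhile any $y \in \Gamma^{(t)}$ has $\dist(y,E)\approx t$, hence $\dist(y,\partial\Xx) \lesssim t$ and $G_{\Xx}(y,o) \lesssim t$ by the rightmost inequality, while $G_{\Xx^{(t)}}(x,o) \gtrsim t$ by a matching lower bound. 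In the tree case this is a clean computation: if $|x| = k' < k = k(t)$ and $x \in \T^{(rt)}$, the walk from $x$ hits $o$ before $\Gamma^{(t)}$ with probability comparable to $1$, and $G_{\T^{(t)}}(x,o)$ can be computed exactly via the one-dimensional birth-death structure along the geodesic $\pi(o,x)$ together with the known escape probabilities of SRW on $\T$. The constants $r_{\T}=1$, $a_{\T}=\qq/(\qq-1)$ then come out of an explicit geometric-series estimate.

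The leftmost inequality $G_{\Xx}(x,o) \ge G_{\Xx^{(t)}}(x,o)$ is just domain monotonicity of the Green function (the Green function of a subdomain is dominated by that of the larger domain), which is standard in both settings — on the tree it is immediate from the interpretation as expected number of visits before an additional stopping, and on the disk it is classical.

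\textbf{The main obstacle} I anticipate is the disk case of the middle inequality, specifically getting explicit numerical constants ($r_{\DD}=7$, $a_{\DD}=18$) rather than just ``some constant''. One must quantify, for $z \in \Gamma^{(t)}_\DD$ (so $\dsf_\DD(z,E)=t$ and hence $1-|z| \le t$, giving $G_\DD(z,0) = -\log|z| \le$ something like $-\log(1-t) \approx t$ for small $t$), an upper bound on $G_\DD(z,0)$; and for $x \in \DD^{(7t)}$ a lower bound on $G_{\DD^{(t)}}(x,0)$, which requires knowing that the harmonic measure from $x$ of the ``far'' part of $\partial\DD^{(t)}$ — or equivalently the probability that Brownian motion from $x$ reaches a neighbourhood of $0$ before hitting $\Gamma^{(t)}$ — is bounded below, and then invoking a Harnack-type or boundary-Harnack estimate to transfer this to a lower bound on the Green function. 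The factor $r=7$ is presumably exactly what is needed so that $\dsf_\DD(x,E) > 7t$ separates $x$ from $\Gamma^{(t)}$ by enough hyperbolic distance to run such an argument; pinning down that $7$ and the resulting $18$ will require the careful but routine geometric estimates about the circles $\{|z-\zeta|=t\}$, $\zeta\in E$, that already appeared in the proof of Lemma \ref{lem:nuw}. I would isolate these as an elementary sublemma and defer the bookkeeping.
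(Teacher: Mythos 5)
Your overall architecture coincides with the paper's: write $G_{\Xx}(\cdot,o)=G_{\Xx^{(t)}}(\cdot,o)+g^{(t)}$, where $g^{(t)}(x)=\mathbb{E}_x\bigl[G_{\Xx}(Z_\tau,o);\,\tau<\infty\bigr]$ is the positive harmonic function on $\Xx^{(t)}$ vanishing on $\partial_\infty\Xx^{(t)}$ and equal to $G_{\Xx}(\cdot,o)$ on $\Gamma^{(t)}$, then bound $g^{(t)}$. The first and third inequalities are as routine as you say (note only that the third compares $\frac1a G_{\Xx}(x,o)$, not $G_{\Xx}(x,o)$ itself, with $\frac1b\dist(x,\partial\Xx)$: on the disk, where $a=b=18$, it is just $\log\frac1{|z|}\ge 1-|z|$, and on the tree it is an equality). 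Your tree sketch is essentially the paper's argument: the cut point $x_0$ on $\pi(o,x)$ gives $\nu_x^{(t)}(\Gamma^{(t)})\le \qq^{|x_0|-|x|}\le\qq^{k-1-|x|}$, hence $g^{(t)}(x)\le\frac{1}{\qq-1}\qq^{-|x|}$ and $G_{\T^{(t)}}(x,o)\ge\qq^{-|x|}=\frac{\qq-1}{\qq}G_{\T}(x,o)$. (Two small corrections: no ``probability comparable to $1$ of reaching $o$'' is needed or true, since the walk is transient; and $r_{\T}=1$, so there is no ``well inside'' condition on the tree.)

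The genuine gap is your reduction for the disk's middle inequality. You propose to show $G_{\DD}(y,0)\le C\,G_{\DD^{(t)}}(x,0)$ uniformly for $y\in\Gamma^{(t)}$ and $x\in\DD^{(7t)}$, via $G_{\DD}(y,0)\lesssim t$ and a ``matching lower bound'' $G_{\DD^{(t)}}(x,0)\gtrsim t$. The latter is false: $\DD^{(7t)}$ contains points $x$ with $\dsf_{\DD}(x,E)$ large but $1-|x|$ arbitrarily small, for which $G_{\DD^{(t)}}(x,0)\le G_{\DD}(x,0)=\log\frac1{|x|}\approx 1-|x|\ll t$. What is actually required is the pointwise comparison $g^{(t)}(x)\le\bigl(1-\frac1{18}\bigr)G_{\DD}(x,0)$, i.e.\ one must show that the probability of hitting $\Gamma^{(t)}$ from $x$ is $O\bigl(G_{\DD}(x,0)/t\bigr)$ as $x$ approaches $\Ss$ away from $E$. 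The paper obtains this by majorizing $g^{(t)}$ on $\Gamma^{(t)}$ --- hence, by the maximum principle, on all of $\DD^{(t)}$ --- by the globally harmonic function $3\bigl(\log\frac{1}{1-t}\bigr)\nu_z(E^{(t)})$ (using Lemma \ref{lem:nuw}), and then estimating the Poisson integral $\nu_z(E^{(t)})$ explicitly: for $z\in\DD^{(rt)}$ the set $E^{(t)}$ lies at angular distance at least $(r-s-1)t$ from $z/|z|$, which gives $\nu_z(E^{(t)})\lesssim(1-|z|)/t$ and hence $g^{(t)}(z)\le\frac{4r}{(r-1)(r-s-1)}\log\frac1{|z|}$; the choice $r=7$, $s=18/17$ makes the factor equal to $\frac{17}{18}$. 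This Poisson-kernel computation is the missing step; the Harnack-type argument you defer in your last paragraph does not by itself produce the needed decay proportional to $1-|z|$.
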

\begin{proof}
The first inequality is clear in both cases. The third inequality is also clear, and
it is an equality in the tree case. We need to prove the second 
inequality separately for tree and disk, and begin this time with the tree. 
\\[5pt]
\textbf{A. Tree case.} 
% , and the last identity is \eqref{eq:Green-T} \& \eqref{eq:metrel1}.
Let $\nu_x^{(t)}$ be the harmonic measure of $\T^{(t)}$ on its boundary. In particular, for 
$y \in \Gamma^{(t)}$, the probability that the random walk starting at $x$ first hits 
$\Gamma^{(t)}$ in $y$ is $\nu_x^{(t)}(y)$. 
The function $g^{(t)}(x) = G_{\T}(x,o) - G_{\T^{(k)}}(x,o)$ is positive
harmonic on $\T^{(t)}$. We have $\lim_{x \to \xi} g^{(t)}(x) = 0$ for $\xi \in \partial_{\infty}\T^{(t)}$
(because this holds for $G_{\T}(x,o)$), while $g^{(t)}(y) = G_{\T}(y,o)$ for $y \in \Gamma^{(t)}$. 
Since the Dirichlet problem on $\widehat \T^{(t)}$
admits solution (a straightforward adaptation of \cite[Thm.4]{CSW}, including in that argument 
vertices which are boundary points), we get that
$$
g^{(t)}(x) = \sum_{y \in \Gamma^{(t)}} G_{\T}(y,o) \nu_x^{(t)}(y) 
= \frac{\qq}{\qq-1}\qq^{-k} \,\nu_x^{(t)}(\Gamma^{(t)})\,, 
$$ 
where $k=k(t)$, as defined in \eqref{eq:kt}. In the last identity (which can of course also
be derived probabilistically), \eqref{eq:Green-T} was used. Now let $x \in \T^{(t)}$ and
let $x_0$ be the last point on the geodesic $\pi(o,x)$ that lies on some $\pi(o,y)$ with
$y \in \Gamma^{(t)}$. Note that $|x_0| \le k-1$. In order to reach $\Gamma^{(t)}$, the random 
walk starting at $x$ needs to pass through $x_0$. Unless $x = x_0$, this is unrestricted random 
walk on $\T$ before the first visit in $x_0$, because up to that time it evolves on a 
branch of $\T$ that contains no element of $\Gamma^{(t)}$. It is well known and easy to see that
$$
\Prob[ \exists n : Z_n = x_0 \mid Z_0 = x] = G(x,x_0)/G(x_0,x_0),
$$ 
see e.g. \cite[Thm.1.38]{W-Markov}. Thus (compare with \cite[Prop.9.23]{W-Markov}), 
$$
\nu_x^{(t)}(\Gamma^{(t)}) = \Prob[ \exists n : Z_n = x_0 \mid Z_0 = x] 
\underbrace{\nu_{x_0}^{(t)}(\Gamma^{(t)})}_{\displaystyle \le 1} 
\le \qq^{-\dsf_{\T}(x,x_0)} = \qq^{|x_0|-|x|} \le \qq^{k-1-|x|}\,.
$$ 
We infer that
$$
g_k(x) \le \frac{\qq}{\qq-1}\qq^{-k}\qq^{k-1-|x|} = \frac{1}{\qq-1}\qq^{-|x|}
$$
Consequently, 
$$
G_{\T^{(k)}}(x,o) = G_{\T}(x,o)-g_k(x) = \frac{\qq}{\qq-1}\qq^{-|x|} -g_k(x) \ge \qq^{-|x|}\,, 
$$
and in view of \eqref{eq:Green-T}, the proposed estimate is proved for the tree.
\\[5pt]
\textbf{B. Disk case.} 
The proof follows \cite{FaGo1}, but we re-elaborate
it to get the constant $a_{\DD} = 7$ and to have $G_{\DD}(z,0)$ in the lower bound.
As before, we prefer to write $z$ instead of $x$ for the
elements of $\DD$. We start in the same way as for the tree.
We know that $G_{\DD}(z,0)= \log \frac{1}{|z|}\,$, and we can decompose
$$
G_{\DD^{(t)}}(z,0) = G_{\DD}(z,0) - g^{(t)}(z)\,, \quad z \in \DD^{(t)}\,,
$$
where $g^{(t)}$ is harmonic on $\DD^{(t)}$ with boundary values $0$ at
$\partial_{\infty}\DD^{(t)}$. For $z \in \Gamma^{(t)}$, there is 
$\zeta \in E$ with $|z-\zeta| = t$, whence $|z| \ge 1-t$. Thus, using
\eqref{lem:nuw}, 
\begin{equation}\label{eq:gtbound}
g^{(t)}(z) = G_{\DD^{(t)}}(z,0) \le \log \frac{1}{1-t} \le 3\,\log\frac{1}{1-t}\, \nu_z(E^{(t)}).
\end{equation}
The right hand side is a harmonic function of $z$ on the whole of $\DD$.
By the maximum principle, \eqref{eq:gtbound} holds on all of $\DD^{(t)}$.

We now choose real parameters $r > s > 1$ with $r-s > 1$. We assume that $t < 1/r$.
Let $z \in \DD$.

\noindent\emph{Case 1.} Let $|z| < (1-t)^s$. Then 
$g^{(t)}(z) \le \log \frac{1}{1-t} \le \frac{1}{s}\log \frac{1}{|z|}$, and
$$
G_{\DD^{(t)}}(z,0) \ge \frac{s-1}{s} G_{\DD}(z,0)\,.
$$
\emph{Case 2.} Let $z \in \DD^{(rt)}$ with $|z| \ge (1-t)^s$. By the 
Bernoulli inequality, $|z| \ge 1-st$. Following \cite{FaGo1}, we write $z = |z| e^{i\theta}$
and
$$
\nu_z(E^{(t)}) = \int_{E^{(t)}} P(z,\xi) \, d\lambda_{\DD}(\xi)
= \bigl(1-|z|^2\bigr) \frac{1}{2\pi} \int_{\{\varphi : e^{i\varphi}\in E^{(t)}\}} 
\frac{d\varphi}{(1-|z|)^2 + 4|z|\sin^2 \frac{\varphi-\theta}{2}}\,. 
$$
Then for $\varphi \in (-\pi\,,\pi]$ with $e^{i\varphi}\in E^{(t)}$,
using $rt \le \dist(z,E) \le 1-|z| + \dist(e^{i\theta},E)$,  
$$
\pi \ge |\phi-\theta| \ge 2 \big|\sin \tfrac{\varphi-\theta}{2}\big| = |e^{i\theta} - e^{i\varphi}|
\ge \dist(e^{i\theta},E) -t \ge rt - (1-|z|) - t \ge \tau\,t\,, 
$$  
where $\tau = r-s-1$. Combining these estimates with \eqref{eq:gtbound},
$$
\begin{aligned}
g^{(t)}(z) &\le 3 \Bigl(\log \frac{1}{1-t}\Bigr) \bigl(1-|z|^2\bigr)\,
\frac{1}{2\pi} \int_{\{\varphi \,:\, \tau\,t \le |\varphi-\theta| \le \pi\}} 
\frac{d\varphi}{(1-|z|)^2 + 4|z|\sin^2 \frac{\varphi-\theta}{2}}\\ 
&= \frac{6}{\pi} \Bigl(\log \frac{1}{1-t}\Bigr)\bigl(1-|z|^2\bigr) \int_{\tau\,t/2}^{\pi/2} 
\frac{d\varphi}{(1-|z|)^2 + 4|z|\sin^2 \varphi}\\
&= \frac{6}{\pi} \Bigl(\log \frac{1}{1-t}\Bigr) 
\arctan \!\left(\frac{1-|z|}{1+|z|} \cot \Bigl(\frac{\tau\, t}{2}\Bigr)\right)\\
&\le \frac{6}{\pi} \Bigl(\log \frac{1}{1-t}\Bigr) \Bigl(\cot \frac{\tau\, t}{2}\Bigr) 
\bigl(1-|z|\bigr)\,.
\end{aligned}
$$
Since $rt <  1 < \pi/3$, we have $\tau\, t /2 < \pi/6$, whence $\cot(\tau\,t/2) \le 2\pi/(3\tau\,t)$.
Also, for $0 < t < 1/r$, we have $\log 1/(1-t) \le r\,t/(r-1)$.
Therefore
$$
g^{(t)}(z) \le \frac{4}{\tau\,t} \Bigl(\log \frac{1}{1-t}\Bigr)\bigl(1-|z|\bigr)
\le \frac{4r}{(r-1)(r-s-1)} \log \frac{1}{|z|}
$$
Thus, in Case 2,
$$
G_{\DD^{(t)}}(z,0) \ge \Bigl(1- \frac{4r}{(r-1)(r-s-1)}\Bigr) G_{\DD}(z,0)\,.
$$
Choosing $r=7$ and $s=18/17$, we get the proposed estimate.
\end{proof}
At the cost of increasing $r$, one can get a better (bigger) lower bound on the disk.
For our purpose, smaller $r_{\DD}$ will be better. The proof allows to take any
number $r > (7 + \sqrt{41})/2$.

\smallskip

With $u$ and $\Psi$ as in Theorem \ref{thm:main1}, we would like to have a
more general type of boundary moment to be finite, even when the integral in
\eqref{eq:fin-int} is infinite. To this end, we consider a continuous, increasing
function $\Phi: \bigl[0\,,\,\diam(\Xx)\bigr] \to [0\,,\,\infty)$ with $\Phi(0)=0$. 
With $\Phi$ as well as with $\Psi$, we associate  the continuous, non-negative measures 
$d\Phi$ and $d\Psi$ on $\bigl(0\,,\,\diam(\Xx)\bigr]$ 
which give mass $\Phi(b)-\Phi(a)$, resp. $\Psi(a)-\Psi(b)$ to any interval 
$(a\,,\,b] \subset \bigl(0\,,\,\diam(\Xx)\bigr]$. Furthermore, we consider the decreasing, 
continuous function
\begin{equation}\label{eq:Upsilon}
\Upsilon: \bigl[0\,,\,\diam(\Xx)\bigr] \to [0\,,\,\infty]\,,\quad 
\Upsilon(t) = \int_t^{\diam(\Xx)} \Phi(s)\,d\Psi(s)\,.
\end{equation} 
It will (typically) occur that $\Upsilon(0)=\infty$. We should consider $\Upsilon$ as a
downscaling of $\Psi$; indeed, $\Upsilon(t) \le \|\Phi\|_{\infty}\, \Psi(t)$.
If $\Psi$ is differentiable on $\bigl(0\,,\,\diam(\Xx)\bigr)$, then $d\Psi(t) = -\Psi'(t)\,dt$, and
$\Upsilon'(t) = \Phi(t) \,\Psi'(t)$. The case considered in \cite{FaGo1} is the one
where $\Psi(t) = t^{-q}$ and $\Phi(t) = t^{\alpha}$, where $0 < \alpha < q$, so that 
$\Upsilon(t) \asymp t^{\alpha-q}$.  

\begin{thm}\label{thm:main2}
Let the subharmonic function $u$ on $\Xx$, the ``singular'' set $E \subset \partial\Xx$ 
and the function $\Psi$ be as in Theorem \ref{thm:main1}, but with infinite 
integral in \eqref{eq:fin-int}. For continuous, increasing
$\Phi: \bigl[0\,,\,\diam(\Xx)\bigr] \to [0\,,\,\infty)$ with $\Phi(0)=0$ and the associated function 
$\Upsilon(t)$ according
to \eqref{eq:Upsilon}, suppose that
\begin{equation*}%\label{eq:fin-int2}
\int_{\partial\Xx} \Upsilon \bigl(\dist(\xi,E)\bigr) \,d\lambda(\xi) < \infty\,.
\end{equation*}
Then the Riesz measure $\mu^u$ satisfies the extended boundary moment condition
\begin{equation}\label{eq:momX-Psi}
\int_{\Xx} \dist(x,\partial \Xx)\,\Phi\bigl( \dist(x,E)/R\bigr)\, d\mu^u(x) < \infty\,, 
\end{equation}
where $R =  R_{\Xx}$ is given by $R_{\DD}=14$, resp. $R_{\T}=1$. 
\end{thm}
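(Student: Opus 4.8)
The plan is to run the harmonic-majorant construction of Theorem~\ref{thm:main1} \emph{inside} each truncated domain $\Xx^{(t)}$, convert the resulting potential estimate into a bound on the first moment of $\mu^u$ over $\Xx^{(rt)}$ via the Green function estimate of Theorem~\ref{thm:green}, and then integrate these level-$t$ bounds against $d\Phi$ so as to build the weight $\Phi(\dist(x,E)/R)$ in \eqref{eq:momX-Psi}; here and below $r,b,c,R$ denote the $\Xx$-dependent constants $r_\Xx,b_\Xx,c_\Xx,R_\Xx$ of Theorem~\ref{thm:green} and Lemma~\ref{lem:nuw}. Fix the origin $o$; we may assume $u(o)>-\infty$ (automatic on the tree). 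For $0<t<1$ the \emph{bounded} datum $\psi^{(t)}=\min\{\Psi(t),\Psi(\dist(\cdot,E))\}$ produces the finite harmonic function $h^{(t)}$ of \eqref{eq:ht}, and the inequality $u\le c\,h^{(t)}$ on $\Xx^{(t)}$ recorded in \eqref{eq:u-le-ht} is available here, since its derivation in the proof of Theorem~\ref{thm:main1} never invokes the integrability hypothesis \eqref{eq:fin-int} (which now fails). Since $o\in\Xx^{(t)}$ (indeed $\dist(o,E)=1$) and $c\,h^{(t)}-u$ is a non-negative superharmonic function on $\Xx^{(t)}$ whose Riesz measure is $\mu^u$ restricted to $\Xx^{(t)}$, it dominates the potential $G_{\Xx^{(t)}}\mu^u$, so $G_{\Xx^{(t)}}\mu^u(o)\le c\,h^{(t)}(o)-u(o)$. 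Inserting $G_{\Xx^{(t)}}(x,o)\ge b^{-1}\dist(x,\partial\Xx)$ for $x\in\Xx^{(rt)}$ (Theorem~\ref{thm:green}) together with $\Xx^{(rt)}\subseteq\Xx^{(t)}$ yields, for $0<t<1/r$,
$$\int_{\Xx^{(rt)}}\dist(x,\partial\Xx)\,d\mu^u(x)\ \le\ b\bigl(c\,h^{(t)}(o)-u(o)\bigr),$$
where, by \eqref{eq:ht} with $\nu_o=\lambda$, $h^{(t)}(o)=\Psi(t)\,\lambda(E^{(t)})+\int_{E^{(t)}_*}\Psi(\dist(\cdot,E))\,d\lambda$.

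Next I lift this to the weighted moment by a layer-cake decomposition. Since $\Phi$ is continuous with $\Phi(0)=0$,
$$\Phi\bigl(\dist(x,E)/R\bigr)=\int_{(0,\,\diam(\Xx)]}\mathbf{1}_{\{\dist(x,E)>R\sigma\}}\,d\Phi(\sigma),$$
and the decisive point is the inclusion $\{x\in\Xx:\dist(x,E)>R\sigma\}\subseteq\Xx^{(r\sigma)}$. On the tree this is immediate, because $\T^{(s)}=\{x:\rho_\T(x,E)>s\}$ and $R_\T=r_\T=1$. On the disk, where $R_\DD=2r_\DD$, it is the plane-geometry fact that the component $\DD^{(r\sigma)}$ of the origin in $\{\dsf_\DD(\cdot,E)>r\sigma\}$ already contains every $z$ with $\dsf_\DD(z,E)>2r\sigma$: the remaining components are the ``triangular'' regions described earlier in this section, and the elementary estimate that such a region is too small to reach Euclidean distance $2r\sigma$ from $E$ is exactly what dictates $R_\DD=2r_\DD$. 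Granting this, and noting $\diam(\Xx)/R\le 1/r$ (with equality) so that the previous display applies for every $\sigma$ in the range of integration, Tonelli's theorem gives
$$\int_\Xx\dist(x,\partial\Xx)\,\Phi\bigl(\dist(x,E)/R\bigr)\,d\mu^u(x)\ \le\ bc\!\int_{(0,\,\diam(\Xx)/R]}\!h^{(\sigma)}(o)\,d\Phi(\sigma)\ +\ C_0,$$
with $C_0=-\,b\,u(o)\,\Phi\bigl(\diam(\Xx)/R\bigr)<\infty$.

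It remains to show $\int h^{(\sigma)}(o)\,d\Phi(\sigma)<\infty$. By the formula for $h^{(\sigma)}(o)$ this splits into two parts, each treated by a second Tonelli. The first, $\int\Psi(\sigma)\,\lambda(E^{(\sigma)})\,d\Phi(\sigma)$, equals $\int_{\partial\Xx}F(\dist(\xi,E))\,d\lambda(\xi)$ with $F(t)\le\int_t^{D}\Psi\,d\Phi$ and $D=\diam(\Xx)$; integrating by parts ($d(\Phi\Psi)=\Phi\,d\Psi+\Psi\,d\Phi$, both functions continuous) one has $\int_t^{D}\Psi\,d\Phi=\Upsilon(t)-\Phi(t)\Psi(t)+\Phi(D)\Psi(D)\le\Upsilon(t)+\Phi(D)\Psi(D)$. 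The second part equals $\int_{\partial\Xx}\Phi(\dist(\xi,E))\,\Psi(\dist(\xi,E))\,d\lambda(\xi)$, and $\Phi(t)\Psi(t)\le\Upsilon(t)+\Phi(D)\Psi(D)$ because $\Upsilon(t)\ge\Phi(t)\bigl(\Psi(t)-\Psi(D)\bigr)$. Hence both parts are at most $\int_{\partial\Xx}\bigl[\Upsilon(\dist(\xi,E))+\Phi(D)\Psi(D)\bigr]\,d\lambda(\xi)$, which is finite by the hypothesis of the theorem (note $\lambda(E)=0$ whenever $\Upsilon(0)=\infty$, which that hypothesis forces). This establishes \eqref{eq:momX-Psi}.

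The only non-routine ingredient is the disk geometric lemma — that the non-origin components of $\{\dsf_\DD(\cdot,E)>s\}$ lie within Euclidean distance $2s$ of $E$; everything else is bookkeeping with the Riesz decomposition, Theorem~\ref{thm:green}, and the two Tonelli / integration-by-parts steps. On the tree this difficulty disappears, which is exactly why one can take $R_\T=r_\T=1$.
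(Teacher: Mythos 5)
Your proposal follows the same architecture as the paper's proof: reuse the majorization $u\le c_\Xx h^{(t)}$ on $\Xx^{(t)}$ from the proof of Theorem \ref{thm:main1} (correctly observing that its derivation never used \eqref{eq:fin-int}), pass via the Riesz decomposition to $G_{\Xx^{(t)}}\mu^u(o)\le c_\Xx h^{(t)}(o)-u(o)$, convert this into a first-moment bound over $\Xx^{(rt)}$ using Theorem \ref{thm:green}, and then integrate in $t$ against $d\Phi$ by a layer-cake/Tonelli argument. Your final bookkeeping is organized a little differently — you integrate $h^{(\sigma)}(o)\,d\Phi(\sigma)$ directly and split it into the two terms $\int\Psi(\sigma)\lambda(E^{(\sigma)})\,d\Phi(\sigma)$ and $\int\Phi\Psi\,d\lambda$, each controlled by $\Upsilon$ plus a constant via integration by parts, whereas the paper first converts $\int_{E_*^{(t)}}\Psi\,d\lambda$ into $\int_t^1\lambda(E^{(s)})\,d\Psi(s)$ (inequality \eqref{eq:ineq2}) and only then integrates against $d\Phi$ and applies Fubini to recognize $\int\Upsilon(\dist(\xi,E))\,d\lambda$. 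These computations are equivalent and both correct.

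The one genuine gap is the disk inclusion $\{z\in\DD:\dsf_{\DD}(z,E)>R\sigma\}\subseteq\DD^{(r\sigma)}$ with $R_\DD=2r_\DD$, which you correctly single out as the only non-routine ingredient but then justify only heuristically, by claiming that the non-origin (``triangular'') components of $\{\dsf_{\DD}(\cdot,E)>r\sigma\}$ are too small to contain points at distance $>2r\sigma$ from $E$. That claim is true but you give no estimate for it, and quantifying the size of those components is not the efficient route. The paper's argument (inclusion \eqref{eq:subset}, taken from \cite{FaGo1}) is a one-liner that avoids any discussion of the other components: for $\alpha\in[0,1]$ one has $\dsf_{\DD}(z,E)\le|z-\alpha z|+\dsf_{\DD}(\alpha z,E)\le(1-\alpha)|z|+\dsf_{\DD}(\alpha z,E)\le 2\,\dsf_{\DD}(\alpha z,E)$, since $\dsf_{\DD}(\alpha z,E)\ge 1-\alpha|z|\ge(1-\alpha)|z|$ is \emph{not} quite what is used — rather, $(1-\alpha)|z|\le 1-\alpha|z|\le\dsf_{\DD}(\alpha z,E)$ because $E\subset\Ss$. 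Hence if $\dsf_{\DD}(z,E)>2r\sigma$ then the entire radial segment $\{\alpha z:\alpha\in[0,1]\}$ lies in $\{w:\dsf_{\DD}(w,E)>r\sigma\}$, so $z$ is connected to the origin within that set, i.e.\ $z\in\DD^{(r\sigma)}$. With this substituted for your geometric assertion, your proof is complete and matches the paper's.
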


For the disk case, when $\Psi(t)=t^{-q}$ and $\Phi(t)=t^{\alpha}$ ($0<\alpha < q$),
this boils down to Theorem 1-(ii)-(7) of \cite{FaGo1}.

In typical instances, $\Phi$ will have the \emph{doubling property} 
$\Phi(t/2) \ge C\cdot \Phi(t)$ for a fixed $C > 0$. In this case, division by 
$R$ can be omitted in \eqref{eq:momX-Psi} even on the disk.

\begin{cor}\label{cor:doubling} Consider the disk.
Under the assumptions of Theorem \ref{thm:main2}, if $1/\Psi$ is doubling and
$$
\int_{\Ss} \Psi \bigl(\dsf_{\DD}(\xi,E)\bigr)^{1-\ep} \,d\lambda_{\Ss}(\xi) < \infty\,,
$$ 
then 
$$
\int_{\DD} \dsf_{\DD}(x,\Ss)\,\Psi\bigl( \dsf_{\DD}(x,E)\bigr)^{-\ep}\, d\mu^u(x) < \infty\,. 
$$
\end{cor}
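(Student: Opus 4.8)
The plan is to deduce the corollary directly from Theorem \ref{thm:main2}, applied with the single choice $\Phi(t)=\Psi(t)^{-\ep}$, and then to remove the numerical constant $R_{\DD}=14$ by means of the doubling hypothesis. Throughout I assume $0<\ep<1$, the only range in which the hypothesis $\int_{\Ss}\Psi^{1-\ep}\,d\lambda_{\Ss}<\infty$ is a genuine relaxation of the standing assumption $\int_{\Ss}\Psi\,d\lambda_{\Ss}=\infty$ of Theorem \ref{thm:main2}; I may also assume $\Psi>0$ on $\bigl(0\,,\,\diam(\DD)\bigr]$, which holds in the motivating case $\Psi(t)\asymp t^{-q}$ and guarantees that $\Phi$ is finite-valued. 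First I would check that $\Phi(t)=\Psi(t)^{-\ep}$ satisfies the hypotheses imposed on $\Phi$ in Theorem \ref{thm:main2}: it is continuous on $\bigl[0\,,\,\diam(\DD)\bigr]$; it is increasing, because $\Psi$ is decreasing and $x\mapsto x^{-\ep}$ is decreasing; and $\Phi(0)=\Psi(0)^{-\ep}=0$ since $\Psi(0)=\infty$.

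Next I would compute the function $\Upsilon$ of \eqref{eq:Upsilon} for this $\Phi$. By the change-of-variables formula $\int_t^{\diam(\DD)}F(\Psi(s))\,d\Psi(s)=\int_{\Psi(\diam(\DD))}^{\Psi(t)}F(v)\,dv$, valid for Borel $F\ge 0$ (the sub-intervals on which $\Psi$ is constant carry no $d\Psi$-mass),
$$
\Upsilon(t)=\int_t^{\diam(\DD)}\Psi(s)^{-\ep}\,d\Psi(s)
=\int_{\Psi(\diam(\DD))}^{\Psi(t)}v^{-\ep}\,dv
=\frac{\Psi(t)^{1-\ep}-\Psi(\diam(\DD))^{1-\ep}}{1-\ep}
\le\frac{\Psi(t)^{1-\ep}}{1-\ep}\,,
$$
using that $\Psi(\diam(\DD))$ is finite. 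Consequently the assumption $\int_{\Ss}\Psi\bigl(\dsf_{\DD}(\xi,E)\bigr)^{1-\ep}\,d\lambda_{\Ss}(\xi)<\infty$ forces $\int_{\Ss}\Upsilon\bigl(\dsf_{\DD}(\xi,E)\bigr)\,d\lambda_{\Ss}(\xi)<\infty$, which is exactly the integrability hypothesis needed to invoke Theorem \ref{thm:main2}.

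Applying Theorem \ref{thm:main2} with $R_{\DD}=14$ then gives
$$
\int_{\DD}\dsf_{\DD}(x,\Ss)\,\Psi\bigl(\dsf_{\DD}(x,E)/14\bigr)^{-\ep}\,d\mu^u(x)<\infty\,,
$$
and it remains only to replace $\dsf_{\DD}(x,E)/14$ by $\dsf_{\DD}(x,E)$ in the integrand. Here I would use that $1/\Psi$ is doubling, say $(1/\Psi)(t/2)\ge C\,(1/\Psi)(t)$ with $0<C\le 1$: since $1/\Psi$ is increasing and $14<2^4$, iterating the doubling inequality four times yields $(1/\Psi)(t/14)\ge(1/\Psi)(t/2^4)\ge C^{4}\,(1/\Psi)(t)$, hence $\Psi(t/14)^{-\ep}\ge C^{4\ep}\,\Psi(t)^{-\ep}$ after raising to the power $\ep>0$. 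Therefore
$$
\int_{\DD}\dsf_{\DD}(x,\Ss)\,\Psi\bigl(\dsf_{\DD}(x,E)\bigr)^{-\ep}\,d\mu^u(x)
\le C^{-4\ep}\int_{\DD}\dsf_{\DD}(x,\Ss)\,\Psi\bigl(\dsf_{\DD}(x,E)/14\bigr)^{-\ep}\,d\mu^u(x)<\infty\,,
$$
which is the claim.

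I do not expect a genuine obstacle: the whole content sits in Theorem \ref{thm:main2} (and through it in the Green function estimate of Theorem \ref{thm:green}), and the corollary is essentially a repackaging for the power-type weights $1/\Psi$. The two points that need a little care are the restriction $\ep<1$ (so that $\Upsilon\asymp\Psi^{1-\ep}$ rather than being bounded, which would trivialise the statement) and the elementary bookkeeping needed to beat the constant $14$ with four applications of the doubling property; on the tree, where $R_{\T}=1$, the analogous statement needs no doubling at all.
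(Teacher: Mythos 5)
Your proof is correct and is exactly the argument the paper intends (the corollary is stated without an explicit proof, following the remark that doubling of $\Phi$ lets one drop the division by $R_{\DD}=14$): take $\Phi=\Psi^{-\ep}$, observe $\Upsilon(t)\le\Psi(t)^{1-\ep}/(1-\ep)$ so that Theorem \ref{thm:main2} applies, and then absorb the constant $14$ via four applications of the doubling inequality. The two side remarks you make — that $0<\ep<1$ is the meaningful range and that $\Psi>0$ is needed for $\Phi$ to be finite-valued — are accurate and consistent with the paper's standing assumptions.
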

     
\begin{proof}[\bf Proof of Theorem \ref{thm:main2}] Once again, the proof works in
similar ways on disk and tree. We should keep in mind that on the tree,
integrals with respect to the Riesz measure are infinite sums. 

\smallskip

For most of the proof, we assume that $u(o)$ is finite. On the tree, this is always required,
but on the disk, one may have $u(z) = -\infty$ on a set of measure $0$. We shall briefly
explain at the end how to handle the case $u(0)= -\infty$.   

\smallskip

We take up the thread from the end of the proof of Theorem \ref{thm:main1}, in particular
\eqref{eq:u-le-ht}. That inequality tells us that
$u$ has $c_{\Xx}\, h^{(t)}$ as a harmonic majorant on $\Xx^{(t)}$. Thus, it
has its least harmonic majorant $v^{(t)}$ on that set, and we have the Riesz decomposition
$$
u(x) = v^{(t)}(x) - G_{\Xx^{(t)}}\mu^u(x)\,,\quad x \in \Xx^{(t)}.
$$ 
We have $G_{\DD}(z,0) \ge 1-|z|= \dsf_{\DD}(z,\Ss)$ on the disk, and 
$G_{\T}(x,o) = b_{\T}\, \rho_{\T}(x,\partial\T)$. 
Using Theorem \ref{thm:green}, we get for $0 < t < 1/r$ ($r = r_{\Xx}$)
$$%%\begin{equation}\label{eq:ineq1}
\begin{aligned}
\int_{\Xx^{(rt)}} \dist(x,&\partial\Xx)\,d\mu^u(x) 
\le b_{\Xx}\, G_{\Xx^{(t)}}\mu^u(o) \\ 
&= b_{\Xx}\, \bigl(v^{(t)}(o) - u(o)\bigr)
\le  b_{\Xx}\,c_{\Xx}\, h^{(t)}(o) - b_{\Xx}\,u(o)\\
&= b_{\Xx}\,c_{\Xx}\,
\int_{E^{(t)}_*}\Psi\bigl( \dist(\cdot,E)\bigr)\, d\lambda
+ b_{\Xx}\,c_{\Xx}\, \Psi(t)\, \lambda(E^{(t)}) - b_{\Xx}\,u(o).  
\end{aligned}
$$%%\end{equation}
(In the disk case, $o$ stands once more for the origin.) For the next computation,
we note that $\max \{ \dist(x,E) : x  \in \Xx \}$ has value $1$ for the tree, but may
be between $1$ and $2$ for the disk. Tacitly using continuity of the involved measures, 
and using monotonicity of $\Psi$, for $0 < t < 1$
$$
\begin{aligned}
\int_{E^{(t)}_*} &\Psi\bigl( \dist(\xi,E)\bigr)\, d\lambda(\xi)
=  \int_{E^{(1)} \cap E^{(t)}_*} \Psi\bigl( \dist(\xi,E)\bigr)\, d\lambda(\xi)
+ \int_{E^{(1)}_*} \Psi\bigl( \dist(\xi,E)\bigr)\, d\lambda(\xi)\\
&\le 
\int_{E^{(1)} \cap E^{(t)}_*}\int_{\dist(\xi,E)}^1 \,d\Psi(s)\, d\lambda(\xi)
\;\;+\;\; \Psi(1) \, \lambda(E^{(1)} \cap E^{(t)}_*) + \Psi(1) \, \lambda(E^{(1)}_*)\\
&= \int_{t}^1 
\lambda\bigl(\{ \xi \in \partial \DD: t < \dist(\xi,E) \le s \}\bigr)\,d\Psi(s)
\;\;+\;\ \Psi(1) \, \lambda(E^{(t)}_*)) \\
%% &= \int_{t}^1 \lambda_{\T} (E^{(s)})\,d\Psi(s)
%% -\;\; \lambda_{\T}(E^{(t)})\bigl(\Psi(\qq^{-k(t)}) - \Psi(1)\bigr)
%% \;\;+\;\ \Psi(1) \, \lambda_{\T}(\partial_{\infty}\T^{(t)}) \\
%% &= \int_{\qq^{-k}}^1 \lambda_{\T} 
%% \bigl(\{ \xi \in \partial \T: \rho_{\T}(\xi,E) \le t \}\bigr)\,d\Psi(t)\\
%% &\hspace*{2.6cm}
%% -\;\; \lambda_{\T}(E^{(k)})\,\Psi(\qq^{-k}) \;+\; \Psi(1)\\
%% &= \int_{\qq^{-(k-1)}}^1 \lambda_{\T} 
%% \bigl(\{ \xi \in \partial \T: \rho_{\T}(\xi,E) \le t \}\bigr)\,d\Psi(t)\\
%% &\hspace*{1cm}+\;\; \int_{\qq^{-k}}^{\qq^{-(k-1)}} \lambda_{\T} 
%% \bigl(\{ \xi \in \partial \T: \rho_{\T}(\xi,E) \le \qq^{-k} \}\bigr)\,d\Psi(t)
%% \\ &\hspace*{4.1cm}
%% && \;\;-\;\; \lambda_{\T}(E^{(k)})\,\Psi(\qq^{-k}) \;+\; \Psi(1)\\
&= \int_{t}^1 \lambda(E^{(s)})\,d\Psi(s)
%%\\&\hspace*{2cm}
 \;\;-\;\; \lambda(E^{(t)})\,\Psi(t) \;+\; \Psi(1)\,.
\end{aligned}
$$
Combining this with the previous inequality, 
we get for $0 < t < 1$
\begin{equation}\label{eq:ineq2}
\int_{x \in\Xx^{(t)}} \dist(x,\partial\Xx)\,d\mu^u(x) 
\le b_{\Xx}\,c_{\Xx}\, \int_{t/r}^1 \lambda(E^{(s)})\,d\Psi(s) + C_1\,, 
\end{equation}
where $\;C_1 =  b_{\Xx}\,c_{\Xx}\,\Psi(1) - b_{\Xx}\,u(o)$. 
Because of several smaller subtleties, we now conclude the proofs separately.
\\[5pt]
\textbf{A. Tree case.} 
Recalling that $b_{\T}= r_{\T} = R_{\T} = 1$,
$$
\begin{aligned}
\sum_{x \in \T} &\rho_{\T}(x,\partial \T) \, \Phi\bigl(\rho_{\T}(x,E)\bigr)\, \mu^u(x) 
= \sum_{x \in \T} \rho_{\T}(x,\partial \T) \int_0^{\rho_{\T}(x,E)} d\Phi(t) \, \mu^u(x) \\
&= \int_0^{1} \Biggl( \sum_{\,x \in \T^{(t)}} \rho_{\T}(x,\partial \T) \,\mu^u(x) \Biggr) 
d\Phi(t)
\\ [\text{by \eqref{eq:ineq2}}]\qquad 
&\le c_{\T} \int_0^{1} \int_t^1 \lambda_{\T}(E^{(s)})\,d\Psi(s) \,d\Phi(t)
+ C_1 \,\Phi(1) \\
[\text{Fubini}]\qquad 
&= c_{\T} \int_0^1 \lambda_{\T}(E^{(s)}) \,\Phi(s)\,d\Psi(s) + C_2
= c_{\T} \int_{\partial \T} \Upsilon\bigl( \rho_{\T}(\xi,E) \bigr)\,d\lambda_{\T}(\xi) + C_2\,,
\end{aligned} 
$$
which is finite by assumption.
\\[5pt]
\textbf{B. Disk case.} Note that the maximum possible value of $\dsf_{\DD}(z,E)$ is $2$. 
We refer to a simple observation of \cite{FaGo1}: if $0 < t < 2$ then 
for every $z \in \DD$ and $\alpha \in [0\,,\,1]$, we have 
$\dsf_{\DD}(z,E) \le 2 \dsf_{\DD}(\alpha\,z, E)$. In particular, if $\dsf_{\DD}(z,E) > t$ 
then $\dsf_{\DD}(\alpha\,z, E) > t/2$, so that $z$ lies in the component of $0$ of the
set $\{ w \in \DD :  \dsf_{\DD}(w,E) > t/2\}$. This means that
\begin{equation}\label{eq:subset}
\{ z \in \DD :  \dsf_{\DD}(z,E) > t\}
%E^{(t)}_* 
\subset \DD^{(t/2)}\,.
\end{equation}
Using this, we now compute
$$
\begin{aligned}
\int_{\DD} \dsf_{\DD}(z,\Ss)\, \Phi\bigl( \dsf_{\DD}(z,E)/14\bigr) \,d\mu^u(z)
&= \int_{\DD} \,\, \int_0^{\dsf_{\DD}(z,E)/14} \dsf_{\DD}(z,\Ss)\,d\Phi(t) \,d\mu^u(z)\\
[\text{since}\; \dsf(z,E) < 2]\qquad 
&= \int_0^{1/7} \int_{\{ z \in \DD \,:\,  \dsf_{\DD}(z,E) > 14t\}}  
\dsf_{\DD}(z,\Ss)\,d\mu^u(z)\,d\Phi(t)\\
\\ [\text{by \eqref{eq:subset}}]\qquad 
&\le \int_0^{1/7} \int_{\DD^{(7t)}}  \dsf_{\DD}(z,\Ss)\,d\mu^u(z)\,d\Phi(t)\\
\\ [\text{by \eqref{eq:ineq2}}]\qquad 
&\le b_{\Xx}\,c_{\Xx}\,\int_0^1 \int_{t}^1 \lambda(E^{(s)})\,d\Psi(s) + 
C_1\,\Phi(1)\,,
\end{aligned}
$$
which is seen to be finite by the same calculation as in the tree case.

\smallskip

The case when $u(0) =-\infty$ can be treated exactly as in \cite[p.43]{FaGo1} (where
the subharmonic function is denoted $v$) and is omitted here.
\end{proof}

Finally, we want to prove a converse to Theorem \ref{thm:main2} analogous to 
Proposition \ref{pro:converse}.

\begin{thm}\label{thm:converse}
Let the set $E \subset \partial\Xx$  and the function $\Psi$ be as in Theorem \ref{thm:main1}, 
but with infinite  integral in \eqref{eq:fin-int}. Let 
$\Phi: [0,1] \to [0\,,\,\infty)$ be continuous and increasing with $\Phi(0)=0$ and $\Phi(t) > 0$ for
$t> 0$. For the associated function $\Upsilon(t)$ according
to \eqref{eq:Upsilon}, suppose that
\begin{equation*}%\label{eq:fin-int3}
\int_{\partial\Xx} \Upsilon \bigl(\dist(\xi,E)\bigr) \,d\lambda(\xi) = \infty\,.
\end{equation*}
If $u$ is a subharmonic function on $\Xx$ such that 
$$
u(x) \ge \Psi\bigl(\dist(x,E)\bigr)
$$
then the Riesz measure $\mu^u$ is such that
\begin{equation}\label{eq:inf-momX-Phi}
\int_{\Xx} \dist(x,\partial \Xx)\,\Phi\bigl( \dist(x,E)\bigr)\, d\mu^u(x) = \infty\,. 
\end{equation}
\end{thm}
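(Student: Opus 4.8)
The plan is to prove the statement directly: I will show that the integral in \eqref{eq:inf-momX-Phi} equals $+\infty$, running the estimates of the proof of Theorem~\ref{thm:main2} in reverse. The only preliminary normalisation is that, on the disk, $u(0)$ may be $-\infty$; in that case I would first shift the origin to a point where $u$ is finite (the set $\{u=-\infty\}$ has measure zero, hence is not all of $\DD$), reducing to $u(o)>-\infty$ exactly as on \cite[p.\,43]{FaGo1}; on the tree this case does not occur. I keep the notation $\Xx^{(t)}$, $E^{(t)}$, $E^{(t)}_*$, $\Gamma^{(t)}$, $k=k(t)$, the function $\psi^{(t)}=\min\{\Psi(t),\Psi(\dist(\cdot,E))\}$ and the harmonic function $h^{(t)}(x)=\int_{\partial\Xx}K(x,\cdot)\,\psi^{(t)}\,d\lambda$ of Section~\ref{sec:harmonic}. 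The heart of the argument is the \emph{lower} bound: there are constants $t_0\in(0,1]$, $\kappa_\Xx>0$, $C_\Xx\in\R$ (with $t_0=1$ on $\T$) such that for $0<t<t_0$
\begin{equation*}
\int_{\Xx^{(t)}}\dist(x,\partial\Xx)\,d\mu^u(x)\ \ge\ \kappa_\Xx\int_t^{\diam(\Xx)}\lambda\bigl(E^{(s)}\bigr)\,d\Psi(s)\ +\ C_\Xx .
\end{equation*}

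To obtain this I would first use \eqref{eq:u-le-ht}, which gives $u\le c_\Xx h^{(t)}$ on $\Xx^{(t)}$; hence $u$ has a harmonic majorant on $\Xx^{(t)}$, so a Riesz decomposition $u=v^{(t)}-G_{\Xx^{(t)}}\mu^u$ holds there with $v^{(t)}$ the least harmonic majorant and $G_{\Xx^{(t)}}\mu^u$ finite. Next I claim $v^{(t)}\ge h^{(t)}$ on $\Xx^{(t)}$: both are harmonic; on $\partial_\infty\Xx^{(t)}\subseteq\overline{E^{(t)}_*}$ the boundary limits of $h^{(t)}$ equal $\Psi(\dist(\cdot,E))$, while $v^{(t)}\ge u\ge\Psi(\dist(\cdot,E))$ forces the corresponding $\liminf$ of $v^{(t)}$ to be $\ge\Psi(\dist(\cdot,E))$; on $\Gamma^{(t)}$ we have $h^{(t)}\le\Psi(t)$ (since $\psi^{(t)}\le\Psi(t)$) whereas $\dist(\cdot,E)\le t$ on $\Gamma^{(t)}$ — it equals $t$ on $\DD$ and $\qq^{-k}\le t$ on $\T$ — so by monotonicity of $\Psi$, $v^{(t)}\ge\Psi(\dist(\cdot,E))\ge\Psi(t)$ there; as $v^{(t)}-h^{(t)}$ is harmonic and bounded below, the minimum principle on $\Xx^{(t)}$ gives the claim. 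Evaluating at $o$ (where $\nu_o=\lambda$) and running the same layer-cake computation as after \eqref{eq:ht},
\begin{equation*}
G_{\Xx^{(t)}}\mu^u(o)=v^{(t)}(o)-u(o)\ \ge\ h^{(t)}(o)-u(o)=\Psi\bigl(\diam(\Xx)\bigr)+\int_t^{\diam(\Xx)}\lambda\bigl(E^{(s)}\bigr)\,d\Psi(s)-u(o).
\end{equation*}
It then remains to replace $G_{\Xx^{(t)}}\mu^u(o)$ by $\int_{\Xx^{(t)}}\dist(x,\partial\Xx)\,d\mu^u$. On the tree this is immediate since $G_\T(x,o)=\tfrac{\qq}{\qq-1}\rho_\T(x,\partial\T)$ by \eqref{eq:Green-T} and $G_{\T^{(t)}}\le G_\T$. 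On the disk $1-|z|$ and $G_\DD(z,0)=\log\tfrac1{|z|}$ are comparable only off a neighbourhood of $0$, so I would fix $V=\{|z|<\tfrac12\}\subseteq\DD^{(t)}$ (valid for $t<\tfrac12=:t_0$), use $1-|z|\ge\tfrac12\log\tfrac1{|z|}$ off $V$ together with $G_{\DD^{(t)}}\le G_\DD(\cdot,0)$, and absorb the contribution of $V$ into $C_V=\int_V\log\tfrac1{|z|}\,d\mu^u$, which is finite exactly because $u(0)>-\infty$. This yields the displayed lower bound.

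Given the lower bound, I finish by integrating against $d\Phi$. Since $\Phi(\dist(x,E))=\int_0^{\dist(x,E)}d\Phi(t)$ and $\Xx^{(t)}\subseteq\{x\in\Xx:\dist(x,E)>t\}$, Tonelli gives
\begin{equation*}
\int_\Xx\dist(x,\partial\Xx)\,\Phi\bigl(\dist(x,E)\bigr)\,d\mu^u\ \ge\ \kappa_\Xx\int_0^{t_0}\!\!\int_t^{\diam(\Xx)}\lambda\bigl(E^{(s)}\bigr)\,d\Psi(s)\,d\Phi(t)\ +\ C_\Xx\,\Phi(t_0),
\end{equation*}
and a second application of Tonelli turns the double integral into $\int_0^{\diam(\Xx)}\lambda(E^{(s)})\,\Phi(\min\{s,t_0\})\,d\Psi(s)\ge\int_0^{t_0}\lambda(E^{(s)})\,\Phi(s)\,d\Psi(s)$, which differs from $\int_0^{\diam(\Xx)}\lambda(E^{(s)})\Phi(s)\,d\Psi(s)=\int_{\partial\Xx}\Upsilon(\dist(\xi,E))\,d\lambda$ only by the finite amount $\int_{t_0}^{\diam(\Xx)}\lambda(E^{(s)})\Phi(s)\,d\Psi(s)\le\Phi\bigl(\diam(\Xx)\bigr)\bigl(\Psi(t_0)-\Psi(\diam(\Xx))\bigr)$. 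Since $\int_{\partial\Xx}\Upsilon(\dist(\xi,E))\,d\lambda=\infty$ by hypothesis, the whole expression is $+\infty$, which is \eqref{eq:inf-momX-Phi}.

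I expect the main obstacle to be the inequality $v^{(t)}\ge h^{(t)}$ on $\Xx^{(t)}$. On the disk one has to cope with the fact that $\partial_\infty\DD^{(t)}$ may be a proper part of $\overline{E^{(t)}_*}$ and meets $\Gamma^{(t)}$ at finitely many corner points on $\Ss$, where the minimum principle must be applied across a polar exceptional set; on the tree, $\Gamma^{(t)}$ consists of boundary \emph{vertices} carrying $v^{(t)}(y)=u(y)$, and one invokes solvability of the Dirichlet problem on $\wh\T^{(t)}$ as in the proof of Theorem~\ref{thm:green}. A secondary subtlety, already visible above, is the disk passage from $G_{\DD^{(t)}}\mu^u(0)$ to $\int_{\DD^{(t)}}(1-|z|)\,d\mu^u$: the possible concentration of $\mu^u$ near $o$ must be separated off, and this is precisely where $u(0)>-\infty$ — equivalently, local integrability of $\log\tfrac1{|z|}$ against $\mu^u$ near $0$ — is used.
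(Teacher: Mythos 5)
Your overall strategy (reverse the estimates of Theorem \ref{thm:main2}: show $v^{(t)}\ge h^{(t)}$ by the minimum principle, evaluate at $o$, compare $G_{\Xx^{(t)}}\mu^u(o)$ with the weighted mass of $\mu^u$, then integrate against $d\Phi$ and apply Tonelli) is exactly the paper's route in its main case, and those parts of your write-up are sound. But there is one genuine gap at the very first step: you invoke \eqref{eq:u-le-ht} to conclude that $u\le c_{\Xx}h^{(t)}$ on $\Xx^{(t)}$ and hence that $u$ admits a (least) harmonic majorant $v^{(t)}$ there. The inequality \eqref{eq:u-le-ht} was derived in the proof of Theorem \ref{thm:main1} from the hypothesis $u(x)\le\Psi\bigl(\dist(x,E)\bigr)$, which is \emph{not} assumed in Theorem \ref{thm:converse}; here only the reverse inequality $u\ge\Psi\bigl(\dist(\cdot,E)\bigr)$ is given, and $u$ may well have no harmonic majorant on some $\Xx^{(t)}$. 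So the existence of $v^{(t)}$, on which your whole Case-2-style computation rests, is unjustified as written.

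The fix is cheap and is precisely how the paper proceeds: split into two cases. If for some $t\in(0,1)$ the function $u$ has no harmonic majorant on $\Xx^{(t)}$, then $G_{\Xx^{(t)}}\mu^u\equiv\infty$ there, and since $\Phi\bigl(\dist(x,E)\bigr)\ge\Phi(t)>0$ and $G_{\Xx}(x,o)\ge G_{\Xx^{(t)}}(x,o)$ on $\Xx^{(t)}$, one gets
$\int_{\Xx}G(x,o)\,\Phi\bigl(\dist(x,E)\bigr)\,d\mu^u(x)\ge\Phi(t)\,G_{\Xx^{(t)}}\mu^u(o)=\infty$
directly, and the comparison of $G(\cdot,o)$ with $\dist(\cdot,\partial\Xx)$ away from the origin (which you already carry out) converts this into \eqref{eq:inf-momX-Phi}. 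Only in the complementary case, where $v^{(t)}$ exists for every $t$, does your argument apply verbatim. Once you insert this dichotomy, your proof coincides with the paper's; the remaining points you flag (the corner points of $\partial_{\infty}\DD^{(t)}$, the $|z|<1/2$ splitting and the role of $u(0)>-\infty$) are handled the same way in the paper, which isolates the latter as the preliminary equivalence of \eqref{eq:inf-momX-Phi} with the Green-kernel version \eqref{eq:inf-pot-Phi}.
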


\begin{proof} First of all, we note that \eqref{eq:inf-momX-Phi} hold if and only if
\begin{equation}\label{eq:inf-pot-Phi}
\int_{\Xx} G(x,o)\,\Phi\bigl( \dist(x,E)\bigr)\, d\mu^u(x) = \infty\,. 
\end{equation}
On the tree, this is obvious, because $G_{\T}(x,o) = \frac{\qq}{\qq-1} \,\rho_{\T}(x,\partial\T)$.
On the disk, it is clear that \eqref{eq:inf-momX-Phi} implies \eqref{eq:inf-pot-Phi}. Conversely,
$$
\int_{|z| < 1/2}  G(z,0)\,\Phi\bigl( \dsf_{\DD}(z,E)\bigr)\, d\mu^u(z) 
\le \|\Phi\|_{\infty} \int_{|z| < 1/2}  G(z,0)\, d\mu^u(z) < \infty\,,
$$
while for $|z| \ge 1/2$, we have $G(z,0) = \log \frac{1}{|z|} \le (2\log 2) (1-|z|)$,
so that \eqref{eq:inf-pot-Phi} implies
$$
2\log 2 \int_{|z| \ge 1/2}  (1-|z|)\,\Phi\bigl( \dsf_{\DD}(z,E)\bigr)\, d\mu^u(z) 
\ge \int_{|z| \ge 1/2} G(z,0)\,\Phi\bigl( \dsf_{\DD}(z,E)\bigr)\, d\mu^u(z) = \infty\,.
$$
\emph{Case 1.} Suppose that there is $t \in (0\,,\,1)$ such that $u$ has no harmonic
majorant on the set $\Xx^{(t)}$. Then $G_{\Xx^{(t)}}\mu^u$ is infinite on that set.
Thus, 
$$
\begin{aligned}
\int_{\Xx} G(x,o) \,\Phi\bigl( \dist(x,E)\bigr)\, d\mu^u(x)
&\ge \int_{\Xx^{(t)}} G_{\Xx^{(t)}}(x,o) \,\Phi\bigl( \dist(x,E)\bigr)\, d\mu^u(x)\\
&\ge \Phi(t) \,G_{\Xx^{(t)}}\mu^u(o) = \infty\,,
\end{aligned}
$$
and the equivalence of \eqref{eq:inf-momX-Phi} with \eqref{eq:inf-pot-Phi} implies 
the result.
\\[5pt]
\emph{Case 2.} We are left with the case when for each $t \in (0\,,\,1)$ there is
the (finite) least harmonic majorant $v^{(t)}$ of $u$ on $\Xx^{(t)}$. Recall the function
$h^{(t)}$ of \eqref{eq:ht}. Then for every $\eta \in \partial \Xx^{(t)}$,
$$
\limsup_{x \to \eta} v^{(t)}(x) \ge \limsup _{x \to \eta} u(x) \ge \Psi\bigl(\dist(\eta,E)\bigr)
= \lim_{x \to \eta} h^{(t)}(x)\,.
$$
By the minimum principle, applied to the harmonic function $v^{(t)} - h^{(t)}\,$,
we have $v^{(t)} \ge h^{(t)}$ on $\Xx^{(t)}$. Now we can replace the computations of the proof
of Theorem \ref{thm:main2} with similar inequalities in the reverse direction.
$$
\begin{aligned}
 \int_{\Xx^{(t)}} &G(x,o) \,d\mu^u(x) \ge G_{\Xx^{(t)}}\mu^u(o) = v^{(t)}(o) - u(o)
\ge h^{(t)}(o) - u(o)\\
&= \int_{E^{(1)} \cap E^{(t)}_*} \Psi\bigl(\dist(\cdot,E)\bigr)\,d\lambda 
   + \int_{E^{(1)}_*} \Psi\bigl(\dist(\cdot,E)\bigr)\,d\lambda + \Psi(t)\,\lambda(E^{(t)}) - u(o) \\
&\ge \int_t^1 \lambda(E^{(s)} \setminus E^{(t)}) \,d\Psi(s) 
+ \Psi(1)\,\lambda(E^{(1)} \setminus E^{(t)}) + \Psi(1)\,\lambda(E^{(1)}_*)
+ \Psi(t)\,\lambda(E^{(t)}) - u(o)\\
&= \int_t^1 \lambda(E^{(s)}) \,d\Psi(s) + C_3\,, \hspace*{2cm}\text{where} \quad C_3 = \Psi(1)-u(o)\,.
\end{aligned}
$$
Now let $0 < \ep < 1$. Let $\Phi_{\ep}(s) = \max\{\Phi(s)-\Phi(\ep)\,,\, 0\}$.
Since $u$ has a harmonic majorant on $\Xx^{(\ep)}$, the first integral in the following 
computation is finite. The above estimate is used in the third line.
$$
\begin{aligned}
\int_{\Xx^{(\ep)}} G(x,o) \,&\Phi\bigl(\dist(x,E)\bigr)\, d\mu^u(x) 
\ge \int_{\Xx^{(\ep)}} G(x,o) \int_{\ep}^{\dist(x,E)} \,d\Phi(t)\, d\mu^u(x) \\
&\ge \int_{\ep}^1 \int_{\Xx^{(t)}} G(x,o)\, d\mu^u(x)\, d\Phi(t) \\
&\ge \int_{\ep}^1 \int_t^1 \lambda(E^{(s)}) \, d\Psi(s)\, d\Phi(t) + (1-\ep)C_3 \\
&= \int_{\ep}^1 \lambda(E^{(s)}) \int_\ep^s d\Phi(t) \, d\Psi(s) +  (1-\ep)C_3 \\
&=\int_0^1 \biggl(\int_{\{\xi \in \partial  \Xx : \dist(\xi,E) \le s\}} d\lambda(\xi)\biggr) 
\Phi_{\ep}(s)\,d\Psi(s) + (1-\ep)C_3 \\
&= \int_{E^{(1)}} \int_{\dist(\xi,E)}^1 \Phi_{\ep}(s) \, d\Psi(s) \, d\lambda(\xi) + (1-\ep)C_3 \\
\end{aligned}
$$
As $\ep \to 0$, by monotone convergence, the double integral in the last line tends to
$$
\int_{E^{(1)}} \Bigl(\Upsilon\bigl(\dist(\xi,E)\bigr) - \Upsilon(1) \Bigr)\, d\lambda(\xi)\,,
$$ 
which is infinite by assumption.
\end{proof}

\begin{rmks}\label{rmk:hyp} 
(a) \emph{[Hyperbolic versus Euclidean.]} 
In the introduction and in Section \ref{sec:disk-tree} we insisted on a hyperbolic  
``spirit'' inherent in the material presented here. After all, this was not dominant
in most of our computations. Not only on the disk, we always used the Euclidean metric
$\dsf_{\DD}$, but also on the tree, the dominant role was played by the metric
$\rho_{\T}$ which is the tree-analogue of the Euclidean metric. One point is that
to see the latter analogy, one should first understand that the graph metric on the tree
corresponds to the hyperbolic one on the disk.   

One result where hyperbolicity is strongly present is Theorem \ref{thm:green}. The proof in
the tree case relies directly on the fact that the tree with its graph metric is
$\delta$-hyperbolic in the sense of {\sc Gromov}~\cite{Gro}, with $\delta=0$: every
vertex is a cut-point (it disconnects the tree). Analogously, one might try to prove that
theorem in the disk case using $\delta$-hyperbolicity with $\delta = \log(1 + \sqrt{2}\,)$. 
Indeed, this is related
with the inequalities of {\sc Ancona}~\cite{Anc} which say that the Green kernel of
the open disk is almost submultiplicative along hyperbolic geodesics. (For the disk, this can
be seen by direct inspection via the explicit formulas for the Green kernel.) Now, for 
points $z \in \DD^{(rt)}$ and $\xi \in E^{(t)}$, the hyperbolic geodesic from $z$ to $\xi$
must be at bounded hyperbolic distance from the origin (depending on $r$ and $t$), 
similarly to the (simpler) tree case. However, this idea is more vague than the 
down-to-earth proof following \cite{FaGo1}.
\\[5pt]
(b) In view of the equivalence \eqref{eq:inf-momX-Phi} $\iff$ \eqref{eq:inf-pot-Phi},
in all the results presented here, one can replace the distance to the boundary
$\dist(x,\partial \Xx)$ with the Green kernel $G(x,o)$.
\\[5pt]
(c) Among the common features of disk and tree which allowed us to formulate and
prove the results in very similar ways, the key facts are 
\begin{itemize}
 \item comparability of $G(x,o)$ with $\dist(x,\partial\Xx)$ (the metric is ``intrinsic''
in this sense),
 \item solvability of the Dirichlet problem for continuous functions on $\partial\Xx$, 
and in particular, vanishing of the Green kernel at the boundary, and
 \item the Green kernel estimate of Theorem \ref{thm:green}.
\end{itemize}
\end{rmks}

\begin{imp}{\bf An extension for trees.} 
 Instead of the homogeneous tree, we can take an arbitrary locally finite tree $\T$ 
and equip its edges with \emph{conductances} $a(x,y) = a(y,x) > 0 \iff x \sim y$. Letting
$m(x) = \sum_y a(x,y)$, the transition probabilities $p(x,y) = a(x,y)/m(y)$ give rise
to a nearest neighbour random walk $(Z_n)_{n \ge 0}$ and to the associated Laplacian
$$
\Lap_{\T} f(x) = \sum_{y \sim x} p(x,y) \bigl(f(y)-f(x)\bigr).
$$
We asssume the following. 
\begin{itemize}
 \item[(i)] Strong irreducibility:  $\; 0 < m_0 \le m(x) \le M_0 < \infty\;$ and $\;a(x,y) \ge a_0 > 0\;$
for all $x$ and all $y \sim x$.
\item[(ii)] Strong transience: $\; F(x,y) \le \delta < 1\;$ for all $x$ and all $y \sim x$,
where for arbitrary $x,y \in \T$,
$$
F(x,y) = \Prob[\exists n \ge 0 : Z_n = y \mid Z_0=x]
$$ 
\end{itemize}
The associated Green kernel
$$
G(x,y) = \sum_{n=0}^{\infty} p^{(n)}(x,y)\,,\quad \text{where}\quad
p^{(n)}(x,y) = \Prob[Z_n=y \mid Z_0=x]\,,\quad x,y \in X
$$
is finite and tends to $0$ at infinity by assumption (ii). Note that in our 
notation, $G(x,y)=F(x,y)G(y,y)$. 

We can adapt all the above results regarding the homogenous tree to this more
general situation. The main issue is to define a suitable metric on the compactification
$\wh \T$ in the right way: for $z, w \in \wh \T$,
$$
\rho_{\T}(w, z) = \begin{cases} F(w \wedge z,o) \,,&\text{if}\; z\ne w\,,\\
                                              0 \,,&\text{if}\; z=w\,.
\end{cases}
$$
[For simple random walk on the homogeneous tree, as considered above, this is just the 
metric of \eqref{eq:ultra}.]

In this setting, the tree-versions of theorems \ref{thm:main1}, \ref{thm:main2} and
\ref{thm:converse} remain true.
This applies, in particular, to arbitrary symmetric nearest neighbour random walks on the
free group ($\equiv$ homogeneous tree with even degree). 
\end{imp}

In conclusion, we remark that the very recent note by {\sc Favorov and Radchenko}~\cite{FaRa}
was written in parallel to the present article without mutual knowledge. The results of
\cite{FaRa} concern the disk case and are a bit less general than ours.
We want to point out that
here, our main focus has been on elaborating some aspects of the very strong analogies of the
potential theory on disk and tree, respectively, via focussing on properties of Riesz measures.

\smallskip

\textbf{Acknowledement.} The authors acknowledge email exchanges with M. Stoll (Columbia, SC)
and with S. Favorov (Kharkov).

\end{document}